\newcommand{\me}{\mathbb{E}}
\newcommand{\mz}{\mathbb{Z}}
\newcommand{\mr}{\mathbb{R}}
\newcommand{\mn}{\mathbb{N}}
\newcommand{\mmp}{\mathbb{P}}
\newcommand{\od}{\overset{\mathrm{d}}{=}}
\newcommand{\dod}{\overset{\mathrm{d}}{\to}}
\DeclareMathOperator{\1}{\mathbbm{1}}
\newtheorem{thm}{Theorem}[section]
\newtheorem{lem}{Lemma}[section]
\newtheorem{prp}{Proposition}[section]
\theoremstyle{remark}
\newtheorem{rem}{Remark}[section]
\theoremstyle{definition}
\begin{document}

\title
[Random processes with immigration]
{A note on convergence to stationarity of random processes with immigration}

\author{A.~V.~Marynych}
\address{Faculty of Cybernetics, Taras Shevchenko National University of Kyiv, 01601 Kyiv, Uk\-ra\-ine}
\curraddr{Institut f\"{u}r Mathematische Statistik, Westf\"{a}lische Wilhelms-Universit\"{a}t M\"{u}nster, 48149 M\"{u}nster, Germany}
\email{marynych@unicyb.kiev.ua}
\thanks{The work of the author was supported by the Alexander von Humboldt Foundation.}

\subjclass[2000]{Primary 60F05; Secondary 60K05}

\keywords{random point process, renewal shot noise process, stationary renewal process, weak convergence in the Skorokhod space}

\begin{abstract}
Let $X_1, X_2,\ldots$ be random elements of the Skorokhod space
$D(\mr)$ and $\xi_1, \xi_2, \ldots$ positive random variables such
that the pairs $(X_1,\xi_1), (X_2,\xi_2),\ldots$ are independent
and identically distributed. The random process $Y(t):=\sum_{k
\geq 0}X_{k+1}(t-\xi_1-\ldots-\xi_k)\1_{\{\xi_1+\ldots+\xi_k\leq
t\}}$, $t\in\mr$, is called random process with immigration at the
epochs of a renewal process. 
Assuming that the distribution of $\xi_1$ is nonlattice and has
finite mean while the process $X_1$ decays sufficiently fast, we
prove weak convergence of $(Y(u+t))_{u\in\mr}$ as $t\to\infty$ on
$D(\mr)$ endowed with the $J_1$-topology. The present paper
continues the line of research initiated in
\cite{Iksanov+Marynych+Meiners:2015-1,Iksanov+Marynych+Meiners:2015-2}.
Unlike the corresponding result in
\cite{Iksanov+Marynych+Meiners:2015-2} arbitrary dependence
between $X_1$ and $\xi_1$ is allowed.
\end{abstract}

\maketitle

\section{Introduction}

Denote by $D(\mr)$ the Skorokhod space of right-continuous
real-valued functions which are defined on $\mr$ and have finite
limits from the left at each point of $\mr$. Let
$(\Omega,\mathcal{F},\mmp)$ be a probability space and let
$(X,\xi)$ be a random element in $(\Omega,\mathcal{F},\mmp)$ with
values in $D(\mr)\times [0,\infty)$. More precisely,
$X:=(X(t))_{t\in\mr}$ is a
random process with paths in $D(\mr)$ which satisfies 
$X(t)=0$ for all $t < 0$, and $\xi$ is a positive random variable.
$X$ and $\xi$ are allowed to be arbitrarily dependent. Further, on
$(\Omega,\mathcal{F},\mmp)$ define the sequence $(X_1,\xi_1),
(X_2, \xi_2),\ldots$ of i.i.d.~copies of the pair $(X,\xi)$. Let
$(S_n)_{n\in\mn_0}$ (we use the notation $\mn_0$ for the set of
nonnegative integers $\{0,1,2,\ldots\}$) be the zero-delayed
random walk with increments $\xi_k$, i.e.,
\begin{equation*}
S_0 := 0, \qquad    S_n:=\xi_1+\ldots+\xi_n, \quad  n\in\mn.
\end{equation*}
Denote by $(\nu(t))_{t\in\mr}$ the associated first-passage time
process given by $\nu(t):=\inf\{k\in\mn_0: S_k>t\}$ for $t\in\mr$.
The process $Y := (Y(t))_{t\in\mr}$ defined by
\begin{equation}\label{eq:Y(t)}
Y(t) := \sum_{k\geq 0}X_{k+1}(t-S_k) = \sum_{k=0}^{\nu(t)-1}X_{k+1}(t-S_k), \quad   t\in\mr
\end{equation}
is called {\it random process with immigration at the epochs of a
renewal process} or just {\it random process with immigration}.
These processes were introduced in
\cite{Iksanov+Marynych+Meiners:2015-1,Iksanov+Marynych+Meiners:2015-2}.
Since 
the paper at hand is intended to be a part of this series, we
refrain from giving a detailed discussion here and refer the
reader to the introduction in
\cite{Iksanov+Marynych+Meiners:2015-1} for the motivation,
bibliographic comments as well as the explanation of the term
``processes with immigration''.

In this note we are interested in weak convergence of random
processes with immigration in the situation when $\me [|X(t)|]$ is
finite and, in some sense to be specified later, integrable on
$[0,\infty)$ and $\me \xi<\infty$. Under these assumptions and the
additional assumption that $X$ and $\xi$ are independent, a
functional limit theorem has been obtained recently in
\cite{Iksanov+Marynych+Meiners:2015-2}. In the present paper we
prove a counterpart of that result discarding the independence
assumption.


Before we formulate our main result, some preliminary work has to be done. In Sections \ref{d_r_conv_subsection} and \ref{subsec_mpp}  
we recall some necessary information regarding the Skorokhod space
$D(\mr)$ and the space of marked point processes. In Section
\ref{subsec_smrp} we present the construction of the stationary
marked renewal process.

\subsection{Convergence in $D(\mr)$}\label{d_r_conv_subsection}\footnote{This material 
was borrowed from \cite{Iksanov+Marynych+Meiners:2015-2} and is
given here for the ease of reference.} Consider the subset $D_0$
of the Skorokhod space $D(\mr)$ composed of those functions $f\in
D(\mr)$ which have finite limits
$f(-\infty):=\lim_{t\to-\infty}f(t)$ and
$f(\infty):=\lim_{t\to+\infty}f(t)$. For $a,b\in\mr$, $a<b$ let
$d_0^{a,b}$ be the Skorokhod metric on $D[a,b]$, i.e.,
$$  d_0^{a,b}(x,y)
= \inf_{\lambda \in \Lambda_{a,b}} \bigg( \sup_{t \in [a,b]} |x(\lambda(t)) - y(t)| \vee \sup_{s \not = t} \Big| \log \Big(\frac{\lambda(t)-\lambda(s)}{t-s}\Big)\Big|\bigg)$$
where $\Lambda_{a,b} = \{\lambda: \lambda \text{ is a strictly increasing and continuous function on } [a,b] \text{ with } \lambda(a)=a, \lambda(b)=b\}.$
Following \cite[Section 3]{Lindvall:1973}, for
$f,g\in D_0$, put
\begin{equation*}
d_0(f,g):=d_0^{0,1}(\overline{\phi}(f),\overline{\phi}(g)),
\end{equation*}
where $$\phi(t):=\log (t/(1-t)),\;\;t\in
(0,1),\;\;\phi(0)=-\infty,\;\;\phi(1):=+\infty$$ and
$$
\overline{\phi}:D_0 \to D[0,1],\;\;\overline{\phi}(x)(\cdot):=x(\phi(\cdot)),\;\;x\in D_0.
$$
Then $(D_0,d_0)$ is a complete separable metric space. Mimicking
the argument given in \cite[Section 4]{Lindvall:1973} and using
$d_0$ as a basis one can construct a metric $d$ (its explicit form
is of no importance here) on $D(\mr)$ such that $(D(\mr),d)$ is a complete
separable metric space. We shall need the following
characterization of the convergence on $(D(\mr),d)$, see Theorem 1(b)
in \cite{Lindvall:1973} and Theorem 12.9.3(ii) in
\cite{Whitt:2002} for the convergence on $D[0,\infty)$.
\begin{prp}\label{conv_in_d}
Suppose $f_n, f\in D(\mr)$, $n\in\mn$. The following conditions are
equivalent:
\begin{itemize}
\item[(i)] $f_n\to f$ in $(D(\mr),d)$ as $n\to\infty$;
\item[(ii)] there exist $\lambda_n \in \Lambda$, where
$$\Lambda:=\{\lambda : \lambda\text{ is a strictly increasing and continuous function on } \mr \text{ with } \lambda(\pm\infty)=\pm\infty\},$$
such that, for any finite $a$ and $b$, $a<b$,
$$ \lim_{n\to\infty} \max\Big\{\sup_{u\in [a,\,b]}
|f_n(\lambda_n(u))-f(u)|,\,\sup_{u\in [a,\,b]}|\lambda_n(u)-u|\Big\}=0;
$$
\item[(iii)] for any finite $a$ and $b$, $a<b$ which are continuity points of $f$
it holds that $f_n|_{[a,\,b]}\to f|_{[a,\,b]}$ in
$(D[a,b],d_0^{a,b})$ as $n\to\infty$, where $g|_{[a,\,b]}$ denotes
the restriction of $g\in D(\mr)$ to $[a,b]$.
\end{itemize}
\end{prp}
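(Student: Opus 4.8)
The plan is to establish the cyclic chain (ii) $\Rightarrow$ (iii) $\Rightarrow$ (i) $\Rightarrow$ (ii), the workhorse throughout being the classical Skorokhod description of $J_1$-convergence on a compact interval: for $x_n, x \in D[a,b]$ one has $d_0^{a,b}(x_n, x) \to 0$ if and only if there are $\mu_n \in \Lambda_{a,b}$ with $\sup_{t \in [a,b]} |\mu_n(t) - t| \to 0$ and $\sup_{t \in [a,b]} |x_n(\mu_n(t)) - x(t)| \to 0$. Each implication then reduces to passing between global time deformations on $\mr$ and local ones on compact intervals.

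I would first dispatch (ii) $\Rightarrow$ (iii), the easiest step. Fix continuity points $a<b$ of $f$ and let $(\lambda_n)$ be the maps from (ii). Their restrictions $\lambda_n|_{[a,b]}$ are strictly increasing and uniformly close to the identity, but need not satisfy the boundary constraints $\lambda(a)=a$, $\lambda(b)=b$ defining $\Lambda_{a,b}$. I would repair this by replacing $\lambda_n|_{[a,b]}$ with a $\mu_n \in \Lambda_{a,b}$ that coincides with it on a slightly shrunk subinterval and is bent back to the identity in small collars at $a$ and $b$. Since $\sup_u |\lambda_n(u) - u| \to 0$ and $f$ is continuous at the endpoints, this surgery alters the sup-norm discrepancy only by a quantity tending to $0$, and the compact-interval criterion above yields $f_n|_{[a,b]} \to f|_{[a,b]}$ in $d_0^{a,b}$.

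For (iii) $\Rightarrow$ (i) I would appeal to the construction of $d$. Mimicking Section~4 of \cite{Lindvall:1973} and Theorem~12.9.3(ii) of \cite{Whitt:2002}, the metric $d$ is assembled from the restriction metrics $d_0^{-m,m}$ (with the usual modification forcing continuity at the moving endpoints $\pm m$) in such a way that $d(f_n, f) \to 0$ is, by design, equivalent to $d_0^{a,b}$-convergence of the restrictions along a sequence of intervals $[a,b]$ whose endpoints are continuity points of $f$ and exhaust $\mr$. Hence (iii) supplies precisely the data needed for (i); the content is bookkeeping with the (here suppressed) explicit form of $d$, and this same bookkeeping also yields the reverse extraction used below.

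The remaining and most delicate step is (i) $\Rightarrow$ (ii), where the main obstacle resides. Using (i) together with the construction of $d$, I obtain for each $m \in \mn$ local time changes on $[-m,m]$ witnessing $d_0^{-m,m}$-convergence of the restrictions; these families are mutually inconsistent in $m$, so the difficulty is to \emph{globalize} them into a single sequence $\lambda_n \in \Lambda$. The plan is a diagonal patching: choosing $m_n \uparrow \infty$ slowly enough that on $[-m_n, m_n]$ the selected local time change lies within $1/n$ of the identity and makes the sup-norm discrepancy less than $1/n$, I extend that map from $[-m_n, m_n]$ to all of $\mr$ simply by gluing the identity outside the interval. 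This gluing is automatically continuous and strictly increasing because a map in $\Lambda_{-m_n, m_n}$ already fixes the endpoints $\pm m_n$, and it manifestly satisfies $\lambda_n(\pm\infty) = \pm\infty$. The only genuine verification is that on an arbitrary fixed compact $[a,b]$, which is eventually contained in $[-m_n, m_n]$, both suprema in (ii) are bounded by $1/n$ and hence vanish; carrying out these estimates uniformly, while ensuring $m_n \to \infty$, is where care is required, but no idea beyond the diagonal patching is needed.
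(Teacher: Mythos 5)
Your cycle (ii) $\Rightarrow$ (iii) $\Rightarrow$ (i) $\Rightarrow$ (ii) cannot be compared against a proof in the paper, because the paper gives none: Proposition \ref{conv_in_d} is quoted from the literature (Theorem 1(b) of \cite{Lindvall:1973} and Theorem 12.9.3(ii) of \cite{Whitt:2002}), and the metric $d$ is deliberately never written out. So your proposal is measured against that standard argument, which it broadly reconstructs; in particular your (ii) $\Rightarrow$ (iii) step --- bending the global time changes back to the identity in small collars around $a$ and $b$ --- is the classical restriction lemma and is sound, modulo the routine two-step limit in which the collar width is sent to $0$ using continuity of $f$ at the two endpoints.

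There are, however, two genuine soft spots. The concrete one is the input to your (i) $\Rightarrow$ (ii): you claim that $d$-convergence yields, for \emph{every} $m\in\mn$, time changes witnessing $d_0^{-m,m}$-convergence of the raw restrictions $f_n|_{[-m,m]}\to f|_{[-m,m]}$. This is false when $f$ jumps at an endpoint: take $f=\1_{[1,\infty)}$ and $f_n=\1_{[1+1/n,\infty)}$; then $f_n\to f$ in $(D(\mr),d)$, yet $d_0^{-1,1}\big(f_n|_{[-1,1]},f|_{[-1,1]}\big)\geq 1$ for all $n$, because every $\lambda\in\Lambda_{-1,1}$ fixes the endpoint $1$ where $f=1$ but $f_n=0$. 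The repair is standard but must be stated: the discontinuity points of $f$ are at most countable, so one chooses continuity points $a_m\downarrow-\infty$, $b_m\uparrow\infty$ and runs your diagonal patching on the intervals $[a_m,b_m]$; the gluing-with-the-identity extension and the final estimates then go through verbatim. The second soft spot is your (iii) $\Rightarrow$ (i), which you dismiss as ``bookkeeping by design of $d$''. In any Lindvall-type construction, $d$ is a weighted sum over \emph{all} $m$ of Skorokhod distances between restrictions to $[-m,m]$ that are damped so as to vanish at the endpoints; one must therefore prove that convergence along continuity-endpoint intervals forces convergence of the damped restrictions at every $m$, including those $m$ at which $f$ jumps. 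That the damping factor neutralizes such endpoint jumps is precisely the substance of Lindvall's Theorem 1(b), not bookkeeping, and asserting that (i) and (iii) are equivalent ``by design'' of an unspecified metric comes uncomfortably close to assuming the proposition being proved.
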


\subsection{Marked point processes and their convergence}\label{subsec_mpp}
In this section we recall the notion of a marked point process on
$\mr$ along with the corresponding canonical spaces. We refer to
the books \cite{Matthes+Kerstan+Mecke:1978} and \cite{Sigman:1995}
for the comprehensive exposition of the theory of marked point
processes.

Let $(K,\rho_K)$ be an arbitrary complete separable metric space
and let $(\mr\times K,\rho)$ be the product of $\mr$ and $K$
endowed with the product topology:
$$
\rho((x_1,k_1),(x_2,k_2))=|x_1-x_2|+\rho_K(k_1,k_2),\quad x_1,x_2\in\mr,\quad k_1,k_2\in K.
$$
Let $M_K$ be the set of integer-valued measures $m$ on $(\mr\times
K,\mathcal{B}(\mr\times K))$ such that $m(\mr\times K)=\infty$
and, for every bounded set $A\in \mathcal{B}(\mr)$,
$$
m(A\times K)<\infty,\quad
$$
where $\mathcal{B}(\mathbb{X})$ denotes the Borel sigma-algebra of
the metric space $\mathbb{X}$. An arbitrary $m\in M_K$ can be
represented as a countable sum of Dirac point measures on
$\mr\times K$:
\begin{equation}\label{mpp_rep}
m=\sum_{n\in\mz} \delta_{(t_n,k_n)},
\end{equation}
where the first coordinates can be arranged in the non-decreasing order:
\begin{equation}\label{arrival_epochs_nondecreasing}
\cdots\leq t_{-2}\leq t_{-1} < 0\leq t_0\leq t_1\leq t_2\cdots.
\end{equation}
The elements of $M_K$ are called {\it marked point processes}
(``mpp'' in what follows). A mpp $m$ is called simple if the
corresponding sequence $(t_n)_{n\in\mz}$ is strictly increasing in
which case representation \eqref{mpp_rep} subject to constraints
\eqref{arrival_epochs_nondecreasing} is unique. The sequence
$(t_n)_{n\in\mz}$ represents the arrival epochs and the sequence
$(k_n)$ represents the marks, so that $k_n$ is the mark brought by
the arrival at time $t_n$. The space $K$ is called {\it the mark
space}.

It is known (see, for instance, \cite[Chapter
1.15]{Matthes+Kerstan+Mecke:1978}) that $M_K$ can be endowed with
a structure of complete separable metric space, more precisely
there exists a metric $\rho_{M_K}$ such that:
\begin{itemize}
\item $(M_K,\rho_{M_K})$ is a complete separable metric space;
\item $\rho_{M_K}(m_n,m)\to 0$ as $n\to\infty$ iff
$$
\int f(x)m_n({\rm dx})\to \int f(x)m({\rm dx}),\quad n\to\infty
$$ for every continuous function $f:\mr\times K\to \mr^{+}$ with bounded support.
\end{itemize}

The following proposition gives another characterization of the convergence on space $(M_k,\rho_{M_K})$ (see Theorem D.1 and Corollary D.2 in \cite[Appendix D]{Sigman:1995}).
\begin{prp}\label{conv_of_mpp}
A sequence $m_n:=\sum_{j\in\mz}\delta_{(t^{(n)}_j,k_j^{(n)})}$, $n\in\mn$, of simple mpp's with the increasing enumeration of arrival epochs
$$\cdots<t^{(n)}_{-2}<t^{(n)}_{-1}<0\leq t^{(n)}_0<t^{(n)}_1<t^{(n)}_2<\cdots,\quad n\in\mn$$
converges in $(M_K,\rho_{M_K})$ to a simple mpp
$m:=\sum_{j\in\mz}\delta_{(t_j,k_j)}$ satisfying
\eqref{arrival_epochs_nondecreasing} iff
$$((t^{(n)}_{-q},k_{-q}^{(n)}),\ldots, (t^{(n)}_p,k_p^{(n)}))\to ((t_{-q},k_{-q}),\ldots, (t_{p},k_{p})),\quad
n\to\infty$$ for every $p,q\in\mn$.
\end{prp}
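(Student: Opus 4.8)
The plan is to use the integral characterization of $\rho_{M_K}$-convergence recalled just above the proposition and to treat the two implications separately, the forward one (convergence of the enumerated atoms implies $\rho_{M_K}$-convergence) being routine and the reverse one carrying the real content.

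For the forward implication, fix a nonnegative continuous $f$ whose support is contained in a slab $[-R,R]\times K$. Since $m$ is finite in the first coordinate on bounded sets, only finitely many of its atoms, say $t_{-q},\ldots,t_p$, have epochs in $[-R',R']$ for some $R'>R$ with no atom of $m$ at $\pm R'$. The hypothesis gives $t_j^{(n)}\to t_j$ and $k_j^{(n)}\to k_j$ for each such $j$, together with $t_{-q-1}^{(n)}\to t_{-q-1}<-R'$ and $t_{p+1}^{(n)}\to t_{p+1}>R'$. Because the enumeration is monotone, for all large $n$ the only atoms of $m_n$ with epochs in $[-R,R]$ are $t_{-q}^{(n)},\ldots,t_p^{(n)}$, so that $\int f\,{\rm d}m_n=\sum_{j=-q}^{p}f(t_j^{(n)},k_j^{(n)})\to\sum_{j=-q}^{p}f(t_j,k_j)=\int f\,{\rm d}m$ by joint continuity of $f$. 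This is exactly $\rho_{M_K}$-convergence.

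For the reverse implication I would split the argument into a time part and a mark part. First, testing against functions of the form $f(t,k)=g(t)$ with $g$ continuous, nonnegative and compactly supported shows that the projected measures $m_n(\cdot\times K)\to m(\cdot\times K)$ vaguely on $\mr$; since $m$ is simple its projection $\sum_j\delta_{t_j}$ is a simple counting measure on the line, and the classical characterization of vague convergence of simple point measures on $\mr$ yields $t_j^{(n)}\to t_j$ for every $j$, with the correct indices once the enumeration has been anchored through the normalisation $t_{-1}<0\le t_0$. In particular the number of atoms of $m_n$ in any window $(t_j-\eta,t_j+\eta)$ containing no other epoch of $m$ converges to one, so that no atoms are created or lost in the limit. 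Second, to recover the marks I fix $j$ and choose $g$ continuous, nonnegative, peaked at $t_j$ with $g(t_j)=1$ and supported in such a small window, and test against $f(t,k)=g(t)h(k)$ for an arbitrary bounded continuous $h\ge 0$ on $K$. For large $n$ the only contributing atom is the $j$-th one, whence $g(t_j^{(n)})h(k_j^{(n)})\to g(t_j)h(k_j)=h(k_j)$; since $g(t_j^{(n)})\to 1$ this forces $h(k_j^{(n)})\to h(k_j)$. As $h$ ranges over all bounded continuous nonnegative functions, $\delta_{k_j^{(n)}}\to\delta_{k_j}$ weakly on $K$, i.e. $k_j^{(n)}\to k_j$ in $(K,\rho_K)$. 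Assembling the two parts gives $(t_j^{(n)},k_j^{(n)})\to(t_j,k_j)$ for each $j$, and hence the convergence of every finite tuple.

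The delicate point is the reverse implication, and within it the bookkeeping of indices rather than the analytic estimates: one must ensure that the atom of $m_n$ tracking $t_j$ really carries the index $j$. This is where simplicity of $m$ and the normalisation $t_{-1}<0\le t_0$ enter, the monotonicity of the enumeration propagating the correct index from the atom straddling the origin to all the others. The only genuinely awkward case is an atom of $m$ sitting exactly at the origin ($t_0=0$), where a tracking atom of $m_n$ may fall on either side of $0$ and shift the enumeration; in the intended applications the stationary renewal process has no atom at $0$ almost surely, so this case can be set aside.
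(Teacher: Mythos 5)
Your proof cannot be compared line-by-line with the paper's, because the paper gives none: Proposition \ref{conv_of_mpp} is imported from \cite[Appendix D]{Sigman:1995} (Theorem D.1 and Corollary D.2). Your two-step argument is the natural self-contained substitute, and it is correct: the forward implication by localizing to a time-slab and using joint continuity of the test function, the reverse one by projecting onto the time axis to get vague convergence of $\sum_j\delta_{t_j^{(n)}}$, transferring indices through the origin-anchored monotone enumeration, and then recovering each mark with test functions of the form $g(t)h(k)$. What the citation buys the paper is that the exact hypotheses are someone else's responsibility; what your proof buys is that it exposes precisely where those hypotheses act, and you correctly identified both pressure points: the class of admissible test functions and the possibility of an epoch at the origin.

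Two remarks would make your write-up airtight. First, your test functions $g(t)$ and $g(t)h(k)$ (with $h$ bounded but not compactly supported) have support bounded only in the time coordinate, so you are implicitly reading ``bounded support'' as ``contained in $A\times K$ with $A\subset\mr$ bounded''. That reading is the right one --- it matches the local finiteness requirement $m(A\times K)<\infty$ in the definition of $M_K$ and the convention of \cite{Matthes+Kerstan+Mecke:1978,Sigman:1995} --- and it is in fact forced: if one only required convergence against functions whose support is bounded in the product metric, the reverse implication would fail for unbounded $K$ (take $m$ with $t_0>0$ and $m_n:=m+\delta_{(0,k_n)}$ with $k_n$ escaping every bounded subset of $K$; then $\int f\,{\rm d}m_n\to\int f\,{\rm d}m$ for every such $f$, while $t_0^{(n)}=0\not\to t_0$). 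Second, your exclusion of the case $t_0=0$ is not a lacuna of your argument but a necessary strengthening of the statement, which as written only imposes \eqref{arrival_epochs_nondecreasing} and hence allows $t_0=0$: if $t_0=0$, let $m_n$ be $m$ with the atom $(t_0,k_0)$ moved to $(-1/n,k_0)$; then $m_n\to m$ in $(M_K,\rho_{M_K})$, yet for large $n$ this atom carries index $-1$ in $m_n$, all indices shift by one, and tuple convergence fails. So the reverse implication genuinely requires $t_0>0$. The paper never feels this defect: in the proof of Lemma \ref{rmpp_conv} only the forward implication is used (tuple convergence implies convergence of the mpp's, valid even when $t_0=0$), and the limit $\mathcal{S}^{\mathcal{M}}$ has epochs $S_j^\ast$ with absolutely continuous laws, so an epoch at the origin has probability zero anyway.
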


\subsection{Stationary marked renewal point processes and stationary random processes with immigration}\label{subsec_smrp}
Suppose that $\mu := \me \xi<\infty$ and that the distribution of $\xi$ is nonlattice, i.e., it is not
concentrated on any lattice $d\mz$, $d>0$. On the probability space $(\Omega,\mathcal{F},\mmp)$, where the sequence $(\xi_k,X_k)_{k\in\mn}$ lives,
define the following objects:
\begin{itemize}
\item an independent copy $(X_{-k},\xi_{-k})_{k\in\mn}$ of $(X_k,\xi_k)_{k\in\mn}$;
\item a pair $(X_0,\xi_0)$ which is independent of $(X_k,\xi_k)_{k\in\mz\setminus\{0\}}$ and has joint distribution
\begin{equation}\label{zero_interval_joint_distribution}
\mmp\{\xi_0\leq x,X_0\in \cdot\}=\frac{1}{\mu}\int_{[0,\,x]}
y\mmp\{\xi\in{\rm d}y,X\in \cdot\},\quad x\geq 0;
\end{equation}
\item a random variable $U$, independent of $(X_k,\xi_k)_{k\in\mz}$, with the uniform distribution on $[0,1]$.
\end{itemize}
Set
$$
S_{-k}:=-(\xi_{-1}+\ldots+\xi_{-k}),\quad k\in\mn
$$
and
$$
S_0^{\ast}:=U\xi_0,\quad S^{\ast}_{-1}:=-(1-U)\xi_0,\quad S^{\ast}_k:=S^{\ast}_0+S_k,\quad S^{\ast}_{-k-1}:=S_{-1}^{\ast}+S_{-k},\quad k\in\mn.
$$
The (unmarked) point process
$$
\mathcal{S}:=\sum_{n\in\mz}\delta_{S^{\ast}_n}
$$
is called {\it stationary renewal point process}. Its properties
are well understood. In particular, it is shift invariant, i.e.,
$\sum_{n\in\mz}\delta_{S^{\ast}_n+t}$ has the same distribution as
$\sum_{n\in\mz}\delta_{S^{\ast}_n}$ for all $t\in\mr$. Its
intensity measure is
\begin{equation}\label{eq:intensity=Lebesgue}
\me \mathcal{S}(A)=\mu^{-1}|A|,\quad A\in\mathcal{B}(\mr),
\end{equation}
where $|A|$ is the Lebesgue measure of $A$. Now consider the
process $X_k$ as a mark brought by the point that arrived at time
$S_k^{\ast}$, so that the mark space $K$ is $D(\mr)$. It is
natural to call the mpp
\begin{equation}\label{marked_stationary_definition}
\mathcal{S}^{\mathcal{M}}:=\sum_{n\in\mz}\delta_{(S^{\ast}_n,\,X_{n})}
\end{equation}
{\it two-sided stationary marked renewal process}. Note that it is
simple with probability one in view of the assumption
$\mmp\{\xi>0\}=1$. The process $\mathcal{S}^{\mathcal{M}}$ is
stationary (see Example 1 on pp.~27-29 in \cite{Sigman:1995} for
the one-sided case) in the following sense:
\begin{equation*}
\mathcal{S}^{\mathcal{M}}(\{A+t\}\times B)\od \mathcal{S}^{\mathcal{M}}(A\times B),
\end{equation*}
for arbitrary $A\in \mathcal{B}(\mr)$, $B\in\mathcal{B}(D(\mr))$
and $t\in\mr$, where $\{A+t\}:=\{a+t: a\in A\}$ and $\od$ denotes
equality of distributions. Equivalently,
\begin{equation}\label{mpp_is_stationary}
\sum_{k\in\mz}\delta_{(S^{\ast}_k-t,X_{k})}\od\sum_{k\in\mz}\delta_{(S^{\ast}_k,X_{k})},\quad t\in\mr.
\end{equation}
This fact is a simple consequence of Lemma \ref{rmpp_conv} below
(see Remark \ref{rem_stationarity}). It can also be deduced from
the Palm theory of stationary point processes, see e.g. Chapter
4.8 in \cite{Thorisson:2000}. 

From the construction above it is clear that the distribution of the stationary renewal point process is symmetric around the origin:
\begin{equation}\label{srp_is_symm}
\sum_{n\in\mz}\delta_{S^{\ast}_n}\od \sum_{n\in\mz}\delta_{-S^{\ast}_{-n-1}}.
\end{equation}
For every $k\in\mz$ the mark $X_k$ depends only on the interarrival time $\xi_k=S^{\ast}_k-S^{\ast}_{k-1}$. This observation together with 
\eqref{srp_is_symm} allows us to conclude that
\begin{equation}\label{smrp_is_symm}
\sum_{n\in\mz}\delta_{(S^{\ast}_n,X_{n})}\od \sum_{n\in\mz}\delta_{(S^{\ast}_{n},X_{n+1})}.
\end{equation}

Fix any $u\in\mr$. Since $\lim_{k \to -\infty} S_k^\ast=-\infty$ a.s. (almost surely), the sum
\begin{equation}
\sum_{k\leq -1}X_{k}(u+S_k^\ast)\1_{\{S_k^\ast\geq -u\}}
\end{equation}
is a.s. finite because the number of non-zero summands is
a.s. finite. Define
\begin{equation*}
Y^\ast(u):=\sum_{k\in\mz}X_{k}(u+S_k^\ast) = \sum_{k\in\mz}X_{k}(u+S_k^\ast)\1_{\{S_k^\ast \geq -u\}}
\end{equation*}
with the random variable $Y^\ast(u)$ being a.s. finite
provided that the series $\sum_{k\geq
0}X_{k}(u+S_k^\ast)\1_{\{S_k\geq -u\}}$ converges in probability.
The process $Y^\ast:=(Y^{\ast}(u))_{u\in\mr}$ is called {\it
stationary random process with immigration.} In view of
\eqref{mpp_is_stationary} the process $Y^{\ast}$ is strictly stationary in the usual sense.

Note that from \eqref{smrp_is_symm} we obtain
$$
Y^{\ast}(\cdot)\od \sum_{k\in\mz}X_{k+1}(\cdot+S_k^\ast)=\sum_{k\in\mz}X_{k+1}(\cdot+S_k^\ast)\1_{\{S_k^\ast \geq -u\}}.
$$
In \cite{Iksanov+Marynych+Meiners:2015-2} the stationary process with immigration has been defined by the
series on the right-hand side of the last equality. In some situations this representation is more convenient. For example, it shows that $Y^{\ast}(u)\od Y^{\ast}(0)\od \sum_{k\geq 0}X_{k+1}(S^{\ast}_k)$. In particular, \eqref{eq:intensity=Lebesgue} readily implies 
$$
\me Y^{\ast}(u)=\me Y^{\ast}(0)=\int_0^{\infty}\me X(t){\rm d}t,
$$
since $X_{k+1}$ is independent of $S^{\ast}_k$ for $k\geq 0$\footnote{However, for $k<0$ the mark $X_{k+1}$ depends on $S^{\ast}_k$.}.

\subsection{Main result}
In the following we write `$Z_t\Rightarrow Z$ as $t\to\infty$ on
$(S,d^\ast)$'\, to denote weak convergence of processes on a
complete separable metric space $(S,d^\ast)$ and
`$\overset{d}{\to}$' to denote convergence in distribution of
random variables or random vectors.

Let $\mathfrak{A}$ be the support of the discrete component of
$\xi$, i.e., $\mathfrak{A}:=\{t>0:\mmp\{\xi=t\}>0\}$, and set
$$
<\mathfrak{A}>:=\Big\{\sum_{i}n_ia_i:a_i\in\mathfrak{A},n_i\in\mn_0\Big\}.
$$
Observe that $a\;\in\;<\mathfrak{A}>$ iff $(S_j)_{j\in\mn_0}$ hits
$a$ with positive probability. Further, let
\begin{equation}\label{d12_def}
\mathcal{D} := \{t \in\mr: \mmp\{X(t)\neq X(t-)\}>0\},\, \mathcal{D}_{\xi} := \{t \in\mr: \mmp\{X(\xi+t)\neq X(\xi+t-)\}>0\}
\end{equation}
and
\begin{equation}\label{Delta_x_def}
\Delta_X:=\mathcal{D}_{\xi}\varominus\mathcal{D}:=\{a-b:a\in\mathcal{D}_{\xi},b\in\mathcal{D}\}.
\end{equation}

\begin{thm}\label{main1}
Suppose that $\mu:=\me\xi<\infty$ and the distribution of $\xi$ is nonlattice.
\begin{itemize}
    \item[(a)]
        If the function $G(t):=\me [|X(t)|\wedge 1]$ is directly Riemann integrable\footnote{See Chapter 3.10.1 in \cite{Resnick:1992} for the definition of direct Riemann     integrability.} (dRi) on $[0,\infty)$, then, for each
        $u\in\mr$, the series $\sum_{k\geq 0}X_{k}(u+S_k^\ast)\1_{\{S_k^\ast\geq -u\}}$
        is absolutely convergent with probability one, and, for any $n\in\mn$ and any finite $u_1<u_2<\ldots<u_n$ such that $Y^{\ast}$ is almost surely continuous at $u_i$,
        \begin{equation}\label{main_relation}
        \big(Y(t+u_1),\ldots, Y(t+u_n)\big) \ \dod \
        \big(Y^\ast(u_1),\ldots, Y^\ast(u_n)\big), \ \ t\to\infty.
        \end{equation}
    \item[(b)]
        If, for some $\varepsilon>0$, the function
        $H_{\varepsilon}(t) := \me [\sup_{u\in[t,\,t+\varepsilon]} |X(u)|\wedge 1]$ is dRi on $[0,\infty)$, and
        \begin{equation}\label{no_common_disc_ass}
        <\mathfrak{A}>\cap\; \Delta_X=\varnothing,
                \end{equation}
        then
        \begin{equation}\label{main_relation_func}
        Y(t+u) \ \Rightarrow \ Y^\ast(u), \ \ t\to\infty        \quad   \text{on } (D(\mr),d).
        \end{equation}
\end{itemize}
\end{thm}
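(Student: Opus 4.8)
The plan is to reduce the statement to a convergence-to-stationarity result for marked point processes and then transfer it through a continuous-mapping/coupling argument. Writing $Y(t+u)=\sum_{k\ge 0}X_{k+1}(u+(t-S_k))$, I would introduce the shifted marked renewal process $\mathcal{N}_t:=\sum_{k\ge 0}\delta_{(t-S_k,\,X_{k+1})}$, whose mark space is $K=D(\mr)$, so that $Y(t+u)$ is the value at $u$ of the path associated with $\mathcal{N}_t$. The renewal-theoretic heart of the argument, supplied by Lemma \ref{rmpp_conv}, is that $\mathcal{N}_t$ converges in distribution on $(M_K,\rho_{M_K})$, as $t\to\infty$, to the two-sided stationary marked renewal process $\sum_{n\in\mz}\delta_{(S^{\ast}_n,\,X_{n+1})}$, which by \eqref{smrp_is_symm} has the law of $\mathcal{S}^{\mathcal{M}}$. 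By the Skorokhod representation theorem I would realise this convergence as almost sure convergence on one probability space; Proposition \ref{conv_of_mpp} then guarantees that, in every bounded window, the finitely many arrival epochs $t-S_k$ and their marks $X_{k+1}$ converge jointly, the epochs in $\mr$ and the marks in the $J_1$-topology of $D(\mr)$.

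\textbf{Part (a): finite-dimensional convergence.} I would first establish absolute convergence of $Y^{\ast}(u)$. Using \eqref{eq:intensity=Lebesgue} and the independence of $X_{k+1}$ and $S^{\ast}_k$ for $k\ge 0$, exactly as in the computation of $\me Y^{\ast}$ in Section \ref{subsec_smrp}, one gets $\me\sum_{k\ge 0}\big(|X_{k+1}(u+S^{\ast}_k)|\wedge 1\big)=\mu^{-1}\int_0^{\infty}G(v)\,{\rm d}v<\infty$ because $G$ is dRi, whence the series converges absolutely a.s. For the convergence \eqref{main_relation} I would work on the coupling space and split each $Y(t+u_i)=\sum_k X_{k+1}(u_i+(t-S_k))$ into the contribution of epochs in a window $[-A,A]$ and a remainder. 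On the window only finitely many terms occur, and since $t-S_k\to S^{\ast}_k$ and the marks converge in $J_1$, the hypothesis that $Y^{\ast}$ is almost surely continuous at $u_i$ is precisely the a.s.\ continuity, at the limiting configuration, of the evaluation functional $m\mapsto\sum_j x_j(u_i+t_j)$ needed to pass to the limit. The remainder is controlled uniformly in $t$ by the dRi bound on $G$, so letting $A\to\infty$ after $t\to\infty$ and invoking a convergence-together argument yields \eqref{main_relation}.

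\textbf{Part (b): functional convergence.} I would upgrade finite-dimensional to functional convergence via Proposition \ref{conv_in_d}: it suffices to prove that, on the coupling space, the path $u\mapsto Y(t+u)$ converges to $u\mapsto Y^{\ast}(u)$ in $(D(\mr),d)$. Truncating again to a window $[-A,A]$, the path is a finite sum of shifted marks $X_{k+1}(\cdot+(t-S_k))$, and the point is to show this sum converges in the $J_1$-sense. Since addition is not $J_1$-continuous when jumps collide, I would use the non-coincidence condition \eqref{no_common_disc_ass}: it ensures that, almost surely, the jumps of distinct summands of the limit occur at pairwise distinct locations (here $\mathcal{D}_{\xi}$ accounts for the size-biased spanning pair $(X_0,\xi_0)$ and $<\!\mathfrak{A}\!>$ for the relative positions reachable by the walk). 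Away from each isolated jump only one summand is discontinuous, so the individual time-changes furnished by Proposition \ref{conv_in_d}(ii) can be patched into a single $\lambda_n\in\Lambda$ aligning all jumps simultaneously. The stronger hypothesis that $H_{\varepsilon}$ is dRi controls the supremum of the truncated tail over $\varepsilon$-windows, making the remainder uniformly small in the locally uniform sense demanded by $J_1$; letting $A\to\infty$ gives \eqref{main_relation_func}.

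\textbf{Main obstacle.} The delicate step is the $J_1$-convergence of the finite sum of shifted marks in part (b). Everything hinges on translating the analytic condition \eqref{no_common_disc_ass} into the probabilistic statement that, in the stationary limit, no two summands jump at the same time, and then constructing the common Skorokhod time-change together with the uniform tail estimate from $H_{\varepsilon}$; verifying that these jump locations are indeed a.s.\ distinct, and that the patched time-change works simultaneously for the whole sum, is where the real work lies.
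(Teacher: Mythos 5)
Your proposal is correct in outline and follows essentially the same route as the paper: convergence of the shifted marked renewal point process (Lemma \ref{rmpp_conv}), truncation to a bounded window on which the summation functional is $J_1$-continuous at configurations whose summands have pairwise disjoint jump locations, translation of \eqref{no_common_disc_ass} into that a.s.\ non-collision property, and removal of the truncation via dRi tail bounds --- the paper packages these steps as Lemmas \ref{continuity_lemma2}, \ref{lemma_no_common_disc} and \ref{limit_process_properties} combined with the continuous mapping theorem and Billingsley's convergence-together theorem, and your Skorokhod-representation/patched-time-change phrasing is an equivalent formulation of the same argument. The items you explicitly defer as ``the real work'' (a.s.\ distinctness of the jump locations of distinct summands, including the special treatment of the size-biased pair $(X_0,\xi_0)$, plus the uniform tail and locally uniform convergence estimates) are precisely the technical content of Lemmas \ref{lemma_no_common_disc} and \ref{limit_process_properties} and of the computations the paper cites from \cite{Iksanov+Marynych+Meiners:2015-2}.
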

\begin{rem}\label{remark1}
Condition \eqref{no_common_disc_ass} needs to be checked only if the distribution of $\xi$ has a discrete component. Otherwise, it holds automatically.
If $\xi$ and $X$ are independent, condition \eqref{no_common_disc_ass} can replaced by the following simpler condition
\begin{equation}\label{no_common_disc_ass2}
<\mathfrak{A}>\cap\; \Delta^{\prime}_X=\{0\},
\end{equation}
where $\Delta^{\prime}_X:=\mathcal{D}\varominus\mathcal{D}$. It
can be verified that \eqref{no_common_disc_ass2} is equivalent to
formula (2.3) in \cite{Iksanov+Marynych+Meiners:2015-2}.
\end{rem}

\section{Examples and applications}
In this section we give a few examples from several areas of
applied probability in which the random processes with immigration
appear. Further examples can be found in Section 4 of
\cite{Iksanov+Marynych+Meiners:2015-2}.

\subsection{$GI/G/\infty$ queues.} Let $(\xi,\eta)$ be a random vector with positive components and let $((\xi_k,\eta_k))_{k\in\mn}$ be a sequence of independent copies of $(\xi,\eta)$. Assuming that $\xi$ and $\eta$ are independent and interpreting $\xi_k$ as the interarrival time between $k$-th and $(k+1)$-st customer and $\eta_k$ as the service time of the $k$-th customer in the queuing system with infinitely many servers, we obtain the classical $GI/G/\infty$ queue.
However, in real world situation the assumption that $\xi_k$ and
$\eta_k$ are independent is not always adequate and one may
consider, for example, the model with positively correlated
$\xi_k$ and $\eta_k$, i.e., the larger service time of the last
arrived customer, the more time it takes for the next customer to
arrive. One of several quantities of interest is the number $Y(t)$
of busy servers at the system at time $t$
$$
Y(t)=\sum_{k\geq 0}\1_{\{S_k\leq t <S_k+\eta_{k+1}\}}
$$
which is a particular instance of the random process with
immigration with $X(t):=\1_{\{\eta>t\geq 0\}}$. Assuming that the
distribution of $\xi$ is nonlattice, has finite mean and condition
\eqref{no_common_disc_ass} is fulfilled we can apply our Theorem
\ref{main1} to deduce that \eqref{main_relation_func} holds iff
$\me\eta<\infty$. If the latter holds the queuing system possesses
a stationary regime. In the case of independent $\xi$ and $\eta$ a
one-dimensional version of this result has initially been proved
in \cite{Kaplan:1975}.

\subsection{Divergent perpetuities.} Let $(\xi,\eta)$ be a random vector with $\mmp\{\xi>0\}=1$ and put $X(t):=\eta e^{-at}$, $t\geq 0$,
where $a>0$ is a fixed constant. Then, for every fixed $t\geq 0$, $Y(t)$ is the normalized truncated perpetuity given by
$$
Y(t)=e^{-at}\sum_{k=0}^{\nu(t)-1}\eta_{k+1}e^{aS_k}=e^{-at}\sum_{k=0}^{\nu(t)-1}\eta_{k+1}\prod_{i=1}^{k}e^{a\xi_i},\quad
t\geq 0.
$$
Assuming that $\me\xi<\infty$ and that the distribution of $\xi$
is non-lattice we infer that \eqref{main_relation_func} holds if
$\me (\log^{+}|\eta|)<\infty$.
Indeed, since 
$\mathcal{D}=\{0\}$ and $\mathcal{D}_{\xi}=-\mathfrak{A}$ we
conclude that $\Delta_X=-\mathfrak{A}$ which implies that
\eqref{no_common_disc_ass} holds. Further, for every
$\varepsilon>0$, the function
$$
H_{\varepsilon}(t)=\me[\sup_{u\in [t,t+\varepsilon]}|X(u)|\wedge 1]=\me[|\eta| e^{-at}\wedge 1],\quad t\geq 0
$$
is non-increasing, hence dRi iff it is Lebesgue integrable iff
$\me (\log^{+}|\eta|)<\infty$. Whenever the latter holds 
the one-dimensional distribution of the limiting process can be
characterized via the distribution of a suitable convergent
perpetuity. 
By stationarity of $Y^{\ast}$ we have, for $u>0$,
$$
Y^{\ast}(u)\od Y^{\ast}(0)=\sum_{k\geq
0}\eta_ke^{-aS^{\ast}_k}=\eta_0e^{-aS^{\ast}_0}+e^{-aS_{0}^{\ast}}\sum_{k\geq
1}\eta_ke^{-aS_{k}}=:
\eta_0e^{-aS^{\ast}_0}+e^{-aS_{0}^{\ast}}A_{\infty}.
$$
Here $(\eta_0,S_0^{\ast})$ is independent of $A_{\infty}$, a
perpetuity which satisfies the distributional equality
$$
A_{\infty}\od \eta e^{-a\xi}+e^{-a\xi}A^{\prime}_{\infty},
$$
where $A^{\prime}_{\infty}\od A_{\infty}$ and
$A^{\prime}_{\infty}$ is independent of $(\xi,\eta)$.


\subsection{Continuous time random walks.}  A continuous time random walk ({\it ctrw}, in short) $(C(t))_{t\in\mr}$ is defined by
$$
C(t)=\sum_{k=0}^{\nu(t)-1}J_{k+1},
$$
where $(\nu(t))_{t\in\mr}$ is as before and $(J_k)_{k\in\mn}$ is a
sequence of random displacements which is usually assumed to be
comprised of i.i.d. random variables. $C(t)$ is then interpreted
as the position at time $t$ of a particle performing jumps of
sizes $J_{k+1}$ at the epochs $S_k$, $k=0,\ldots,\nu(t)-1$. If
$J_k$ and $\xi_k$  are independent for every $k\in\mn$ the {\it
ctrw} is called uncoupled, otherwise it is coupled. For the
up-to-date exposition of the limit theory for {\it ctrw} we refer
to \cite{Meerschaert+Straka:2012} and references therein.

If $X$ in the definition of the random process with immigration 
takes the form $X(t)\equiv \eta$, $t\geq 0$, for some random
variable $\eta$, then $Y$ is the classical {\it ctrw}. In general,
the random process with immigration can be thought of as a
generalized ctrw with time-inhomogeneous jumps in which the
displacement size depends on the time. 
The situation considered in our Theorem \ref{main1} covers a very
special class of such ``generalized coupled continuous time random
walks'' with finite mean of the waiting times and rapidly
decreasing displacement process $X$.

\section{Proof of Theorem \ref{main1}}\label{prmain1}
Our proof relies on the sequence of auxiliary lemmas given next
along with the continuous mapping arguments. In what follows we
write $M$ as a shorthand for $M_{D(\mr)}$ and $\rho_M$ for the
corresponding metric so that $(M_{D(\mr)},\rho_m)$ is 
a complete separable metric space (see Section \ref{subsec_mpp}
above).

\begin{lem}\label{rmpp_conv}
Assume that $\me\xi<\infty$ and that the distribution of $\xi$ is nonlattice. Then, as $t\to\infty$,
\begin{equation}\label{conv_of_rmpp}
\sum_{k\in\mz}\delta_{(t-S_k,X_{k+1})} \ \Rightarrow \mathcal{S}^{\mathcal{M}}=\sum_{k\in\mz}\delta_{(S^\ast_k,X_k)}
\end{equation}
on $(M,\rho_M)$.
\end{lem}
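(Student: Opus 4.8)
The plan is to reduce the $M$-convergence in \eqref{conv_of_rmpp} to convergence in distribution of the finitely many enumerated epochs and marks around the origin, and to extract the latter from the key renewal theorem. Both sides of \eqref{conv_of_rmpp} are a.s.\ simple since $\mmp\{\xi>0\}=1$, so by Proposition \ref{conv_of_mpp} it suffices to prove that, for every $p,q\in\mn$, the vector consisting of the $p+q+1$ epochs of $\sum_{k\in\mz}\delta_{(t-S_k,X_{k+1})}$ nearest the origin together with their marks converges in distribution, as $t\to\infty$, to the corresponding vector built from $\mathcal{S}^{\mathcal{M}}$. I would first make the left-hand enumeration explicit. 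Writing $m:=\nu(t)-1$ for the index of the last renewal not exceeding $t$, so that $S_m\le t<S_{m+1}$, the epochs in increasing order are $t_j:=t-S_{m-j}$, $j\in\mz$, with $t_0=t-S_m\ge 0$ the smallest nonnegative one; the mark carried by $t_j$ is $X_{m-j+1}$, which is coupled to the interval immediately to its left, $\xi_{m-j+1}=t_j-t_{j-1}$. This is exactly the coupling in the stationary construction, where $X_k$ depends on $\xi_k=S^{\ast}_k-S^{\ast}_{k-1}$. Since $m\to\infty$, for fixed $p,q$ the relevant indices $m-p+1,\ldots,m+q+1$ are eventually positive, so the independent negative-index copies $(X_{-k},\xi_{-k})$ never enter the local limit.

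It is then natural to split the configuration into two independent groups. The epochs $t_{-1},\ldots,t_{-q}$ to the left of the origin are governed by the forward pairs $(\xi_{m+2},X_{m+2}),\ldots,(\xi_{m+q+1},X_{m+q+1})$. Because $\{\nu(t)=n\}\in\sigma(\xi_1,\ldots,\xi_n)$, conditioning on $\nu(t)$ leaves the pairs of index exceeding $\nu(t)$ an i.i.d.\ sequence distributed as $(\xi,X)$ and independent of everything indexed by $\{1,\ldots,\nu(t)\}$. Hence, already before passing to the limit, the forward group is i.i.d.\ $(\xi,X)$ and independent of the remaining data, which matches the pairs $(\xi_{-1},X_{-1}),\ldots,(\xi_{-q},X_{-q})$ attached to $S^{\ast}_{-1},\ldots,S^{\ast}_{-q}$ in $\mathcal{S}^{\mathcal{M}}$.

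The remaining, and principal, step is the joint convergence of the age $A_t:=t-S_m$, the straddling pair $(\xi_{m+1},X_{m+1})$, and the backward pairs $(\xi_m,X_m),\ldots,(\xi_{m-p+1},X_{m-p+1})$. For a bounded continuous test function I would condition on $S_{m-p}$ and on the $p$ backward pairs, note that summing over the renewal index replaces $S_{m-p}$ by the renewal measure $U(\mathrm{d}z)=\sum_{n\ge 0}\mmp\{S_n\in\mathrm{d}z\}$, and rewrite the constraint $S_m\le t<S_{m+1}$ as $A_t=t-S_m\in[0,\xi_{m+1})$. After the substitution $z=t-w$ the inner integral takes the form $\int_{[0,t]}h(t-z)\,U(\mathrm{d}z)$ with $h$ directly Riemann integrable, and the key renewal theorem (valid since $\xi$ is nonlattice with $\mu<\infty$; see \cite{Resnick:1992}) yields the limit $\mu^{-1}\int_0^\infty h(w)\,\mathrm{d}w$. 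Carrying this out, the backward pairs remain i.i.d.\ $(\xi,X)$, while the straddling interval acquires the weight $\ell\,\mmp\{\xi\in\mathrm{d}\ell,X\in\cdot\}/\mu$ and the age becomes uniform on $[0,\ell)$; by \eqref{zero_interval_joint_distribution} this is precisely the law of $\big(U\xi_0,(\xi_0,X_0)\big)$. Thus the straddling structure converges to $(S^{\ast}_0,X_0)$ and the backward pairs to $(\xi_1,X_1),\ldots,(\xi_p,X_p)$ attached to $S^{\ast}_1,\ldots,S^{\ast}_p$, and together with the forward group this identifies the full limit with $\mathcal{S}^{\mathcal{M}}$.

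The hard part is this last step. One must verify simultaneously that the size-biasing produced by the inspection paradox acts on the \emph{joint} law of the straddling interval and its mark, not merely on the interval length, and that the backward marks decouple into an i.i.d.\ family independent of the size-biased straddling pair and of the uniform age. The marked key renewal computation above is what makes both assertions precise; the only genuinely delicate point is the bookkeeping of which mark is coupled to which interval, and this is resolved by the observation that on both sides of \eqref{conv_of_rmpp} every mark is tied to the interarrival interval on its left.
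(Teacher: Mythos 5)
Your proposal is correct and follows essentially the same route as the paper's proof: reduction to convergence of the finitely many epochs and marks around the origin via Proposition \ref{conv_of_mpp}, decoupling of the pairs with index exceeding $\nu(t)$ by conditioning on $\nu(t)$, and a key-renewal-theorem computation for the age, the straddling pair and the backward pairs, whose limit is identified with the size-biased/uniform-age law \eqref{zero_interval_joint_distribution}. The paper additionally verifies the direct Riemann integrability of the relevant function (bounding it by the nonincreasing integrable function $u\mapsto\mmp\{S_q+\hat{\xi}>u\}$) and carries out the integral identification explicitly, steps your sketch correctly flags but leaves as routine.
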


In view of Lemma 5.1 in \cite{Iksanov+Marynych+Meiners:2015-2},
the result stated in Lemma \ref{rmpp_conv} is quite expected.
However, the rigorous proof is technically involved. Hence it is
relegated to the Appendix.

\begin{rem}\label{rem_stationarity}
Fix $u\in\mr$. The shift-mapping $\theta_u:M\to M$ defined by $\theta_u(m(A\times B))=m(\{A+u\}\times B)$, $A\in\mathcal{B}(\mr)$, $B\in\mathcal{M}(D(\mr))$ is obviously continuous with respect to $\rho_M$ for every $u\in\mr$. Therefore, as $t\to\infty$,
\begin{eqnarray*}
\sum_{k\in\mz}\delta_{(S^\ast_k,X_k)}\Leftarrow \sum_{k\in\mz}\delta_{(t-u-S_k,X_{k+1})}&=&\theta_u\Big(\sum_{k\in\mz}\delta_{(t-S_k,X_{k+1})}\Big)\\
&\Rightarrow&
\theta_u\Big(\sum_{k\in\mz}\delta_{(S^\ast_k,X_k)}\Big)=\sum_{k\in\mz}\delta_{(S^\ast_k-u,X_k)}
\end{eqnarray*}
on $(M,\rho_M)$ which shows that $\mathcal{S}^{\mathcal{M}}$ is
stationary.
\end{rem}

\begin{lem}   \label{continuity_lemma1}
The mapping $T:\mr\times D(\mr)\to D(\mr)$ defined by
$$
T(t_0,f(\cdot))=f(t_0+\cdot)
$$
is measurable with respect to $\mathcal{B}(\mr\times D(\mr))$ and $\mathcal{B}(\mr)$ and continuous on $\mr\times D(\mr)$ endowed with the product topology.
\end{lem}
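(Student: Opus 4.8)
The plan is to treat the two claims—measurability and continuity—separately, and to reduce both to the characterization of convergence in $(D(\mr),d)$ provided by Proposition \ref{conv_in_d}. For continuity, suppose $(t_n,f_n)\to(t_0,f)$ in the product topology on $\mr\times D(\mr)$, so that $t_n\to t_0$ in $\mr$ and $f_n\to f$ in $(D(\mr),d)$. I must show $f_n(t_n+\cdot)\to f(t_0+\cdot)$ in $(D(\mr),d)$. By Proposition \ref{conv_in_d}(ii), there exist time-changes $\lambda_n\in\Lambda$ with $\sup_{u\in[a,b]}|f_n(\lambda_n(u))-f(u)|\to 0$ and $\sup_{u\in[a,b]}|\lambda_n(u)-u|\to 0$ for every finite $a<b$. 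The natural candidate for the time-change witnessing convergence of the shifted functions is $\tilde\lambda_n(u):=\lambda_n(t_0+u)-t_n$. I would first verify $\tilde\lambda_n\in\Lambda$: it is strictly increasing and continuous as a composition and translation of such maps, and $\tilde\lambda_n(\pm\infty)=\pm\infty$ since $\lambda_n(\pm\infty)=\pm\infty$.

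The substantive computation is then to check the two suprema in Proposition \ref{conv_in_d}(ii) for the shifted functions. For the uniform-distance term, observe that
\begin{equation*}
f_n\big(\tilde\lambda_n(u)+t_n\big)=f_n\big(\lambda_n(t_0+u)\big),
\end{equation*}
so that $f_n(t_n+\tilde\lambda_n(u))-f(t_0+u)=f_n(\lambda_n(t_0+u))-f(t_0+u)$, and the supremum over $u\in[a,b]$ equals the supremum of $|f_n(\lambda_n(v))-f(v)|$ over $v\in[t_0+a,t_0+b]$, which tends to $0$ by hypothesis. For the displacement term, I write
\begin{equation*}
\tilde\lambda_n(u)-u=\big(\lambda_n(t_0+u)-(t_0+u)\big)+(t_0-t_n),
\end{equation*}
and both pieces vanish uniformly on $[a,b]$—the first by $\sup_{v\in[t_0+a,t_0+b]}|\lambda_n(v)-v|\to 0$ and the second because $t_n\to t_0$. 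Hence both suprema go to zero on every compact interval, and Proposition \ref{conv_in_d}(ii) yields $f_n(t_n+\cdot)\to f(t_0+\cdot)$, establishing continuity.

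For measurability, I would invoke the fact that $(D(\mr),d)$ is a complete separable (hence second countable, metrizable) space, so that $\mr\times D(\mr)$ with the product topology is itself metrizable and separable; on such a space the Borel $\sigma$-algebra of the product coincides with the product of the Borel $\sigma$-algebras, and any continuous map between metrizable spaces is automatically Borel measurable. Thus measurability of $T$ with respect to $\mathcal{B}(\mr\times D(\mr))$ and $\mathcal{B}(D(\mr))$ follows immediately from the continuity just proved. (I note in passing that the lemma's statement of the codomain $\sigma$-algebra as $\mathcal{B}(\mr)$ appears to be a typographical slip for $\mathcal{B}(D(\mr))$, the relevant target being $D(\mr)$.) The only real obstacle is the continuity argument, and within it the one point demanding care is confirming that $\tilde\lambda_n$ genuinely lies in $\Lambda$ and that the reduction of the two suprema to the already-known quantities is exact; once the bookkeeping of the shift is arranged as above, everything reduces cleanly to the hypotheses supplied by Proposition \ref{conv_in_d}.
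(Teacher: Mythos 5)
Your proof is correct, but it is organized differently from the paper's. The paper does not actually prove continuity: it cites Lemma 5.2 of \cite{Iksanov+Marynych+Meiners:2015-2}, and your explicit time-change argument --- taking $\tilde\lambda_n(u)=\lambda_n(t_0+u)-t_n$, checking $\tilde\lambda_n\in\Lambda$, and reducing the two suprema in Proposition \ref{conv_in_d}(ii) over $[a,b]$ to the known suprema over $[t_0+a,\,t_0+b]$ plus the vanishing constant $|t_0-t_n|$ --- is exactly the work that citation hides; your computation is sound, including the key identity $f_n\bigl(t_n+\tilde\lambda_n(u)\bigr)=f_n\bigl(\lambda_n(t_0+u)\bigr)$. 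Where you genuinely diverge is measurability: the paper proves it independently of continuity, invoking Lemma 2.7 of \cite{Whitt:1980} to reduce the problem to measurability of the evaluations $(t_0,f)\mapsto f(t_0+t)$ for each fixed $t\in\mr$, which it then writes as a composition of measurable maps. You instead deduce measurability from continuity, using that $\mr\times D(\mr)$ is a separable metric space, so that continuous maps are Borel and $\mathcal{B}(\mr\times D(\mr))=\mathcal{B}(\mr)\otimes\mathcal{B}(D(\mr))$. Both routes are valid because $(D(\mr),d)$ is a complete separable metric space; yours is shorter and fully self-contained, while the paper's evaluation-map argument has the minor advantage of not depending on the continuity claim or on the particular metric $d$, since it works directly with the sigma-algebra generated by the coordinate projections. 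You are also right that ``$\mathcal{B}(\mr)$'' in the statement of the lemma is a typographical slip for $\mathcal{B}(D(\mr))$.
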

\begin{proof}
To show the measurability note that by Lemma 2.7 in \cite{Whitt:1980} it is enough to show that $(t_0,f(\cdot))\mapsto f(t_0+t)$ is measurable for
every $t\in\mr$, but this is obvious since this mapping is a composition of two measurable mappings $(t_0,t)\mapsto t_0+t$ and $f\mapsto f(t)$.
The continuity has been proved in \cite{Iksanov+Marynych+Meiners:2015-2}, see Lemma 5.2 therein.
\end{proof}

For fixed $c>0$, $l\in\mn$ and $(u_1,\ldots,u_l)\in\mr^l$, define the mapping
$\phi^{(l)}_c:M \to \mr^l$ by
\begin{equation*}
\phi^{(l)}_c\Big(\sum_{n\in\mz}\delta_{(t_n,f_n(\cdot))}\Big):=\bigg(\sum_{n\in\mz}f_n(t_n+u_j)\1_{\{|t_n|\leq
c\}}\bigg)_{j=1,\ldots,l}
\end{equation*}
and the mapping $\phi_c:M \to D(\mr)$ by
\begin{equation*}
\phi_c\Big(\sum_{n\in\mz}\delta_{(t_n,f_n(\cdot))}\Big) :=
\sum_{n\in\mz}f_n(t_n+\cdot)\1_{\{|t_n| \leq c\}}.
\end{equation*}
For $f\in D(\mr)$, denote by ${\rm Disc}(f)$ the set of
discontinuity points of $f$ on $\mr$. Clearly, both $\phi^{(l)}_c$ and $\phi_c$ are measurable as finite sums of measurable mappings.
The continuity is provided by the next lemma.

\begin{lem}   \label{continuity_lemma2}
The mapping $\phi^{(l)}_c$ is continuous at all points
$m=\sum_{n\in\mz}\delta_{(t_n,f_n(\cdot))}$ such that $m$ is simple, $m(\{-c,c\}\times D(\mr))=0$ and for
which $u_1,\ldots,u_l$ are continuity points of $f_k(t_k+\cdot)$
for all $k\in\mz$. $\phi_c$ is continuous at all points
$m=\sum_{n\in\mz}\delta_{(t_n,f_n(\cdot))}$ such that $m$ is simple, $m(\{-c,c\}\times D(\mr))=0$ and ${\rm
Disc}(f_k(t_k+\cdot))\cap {\rm Disc}(f_j(t_j+\cdot))=\varnothing$
for $k\neq j$.
\end{lem}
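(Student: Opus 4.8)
The plan is to deduce both continuity statements from the characterization of convergence in $(M,\rho_M)$ provided by Proposition \ref{conv_of_mpp}, reducing the infinite defining sums to finite ones by means of the assumption $m(\{-c,c\}\times D(\mr))=0$. Suppose $m_n=\sum_{k\in\mz}\delta_{(t_k^{(n)},f_k^{(n)})}\to m=\sum_{k\in\mz}\delta_{(t_k,f_k)}$ in $(M,\rho_M)$, with $m$ simple and satisfying the hypotheses. Since $m$ is locally finite and carries no atom with first coordinate $\pm c$, the set $F:=\{k\in\mz:|t_k|\leq c\}$ is finite, and there are indices $a<b$ with $t_a<-c<t_{a+1}<\cdots<t_{b-1}<c<t_b$, so that $F=\{a+1,\ldots,b-1\}$. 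By Proposition \ref{conv_of_mpp} we have $t_k^{(n)}\to t_k$ and $f_k^{(n)}\to f_k$ in $D(\mr)$ for every fixed $k$. The first step is to show that the index set stabilizes: because the epochs $t_k^{(n)}$ are increasing in $k$ for each $n$, the four strict limiting inequalities above force, for all large $n$, the relations $t_a^{(n)}<-c$, $t_{a+1}^{(n)}>-c$, $t_{b-1}^{(n)}<c$ and $t_b^{(n)}>c$, whence $\{k:|t_k^{(n)}|\leq c\}=F$. Thus for all large $n$ the defining sums of $\phi_c^{(l)}(m_n)$ and $\phi_c(m_n)$ run over the same finite index set $F$.

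For $\phi_c^{(l)}$ it then suffices to prove, for each $k\in F$ and each $j$, that $f_k^{(n)}(t_k^{(n)}+u_j)\to f_k(t_k+u_j)$; summing over the finite sets $F$ and $\{1,\ldots,l\}$ finishes the argument. By Lemma \ref{continuity_lemma1} the shift map $(t_0,f)\mapsto f(t_0+\cdot)$ is continuous, so $(t_k^{(n)},f_k^{(n)})\to(t_k,f_k)$ yields $f_k^{(n)}(t_k^{(n)}+\cdot)\to f_k(t_k+\cdot)$ in $D(\mr)$. Since, by hypothesis, $u_j$ is a continuity point of $f_k(t_k+\cdot)$, one concludes via the standard fact that evaluation at a continuity point is continuous in the $J_1$-topology: writing $g_n:=f_k^{(n)}(t_k^{(n)}+\cdot)$ and $g:=f_k(t_k+\cdot)$, one picks by Proposition \ref{conv_in_d}(ii) a sequence $\lambda_n$ with $\sup_{u\in[a,b]}|g_n(\lambda_n(u))-g(u)|\to 0$ and $\lambda_n\to\mathrm{id}$ uniformly on a compact neighbourhood $[a,b]$ of $u_j$; with $v_n:=\lambda_n^{-1}(u_j)\to u_j$ one gets $g_n(u_j)=g_n(\lambda_n(v_n))\to g(u_j)$ from the uniform convergence together with the continuity of $g$ at $u_j$.

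For $\phi_c$ the target is $D(\mr)$ rather than $\mr^l$, and the same two ingredients apply summand by summand: by shift continuity $g_k^{(n)}:=f_k^{(n)}(t_k^{(n)}+\cdot)\to g_k:=f_k(t_k+\cdot)$ in $D(\mr)$ for each $k\in F$. The difficulty, and the reason for the disjointness hypothesis, is that addition is not continuous on $D(\mr)$ in the $J_1$-topology, so convergence of the summands does not by itself give convergence of the sum. However, addition $D(\mr)\times D(\mr)\to D(\mr)$ is continuous at pairs $(x,y)$ with ${\rm Disc}(x)\cap{\rm Disc}(y)=\varnothing$ (see e.g.\ \cite{Whitt:2002}). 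Since by assumption the limits $g_k$, $k\in F$, have pairwise disjoint discontinuity sets, an induction over the elements of $F$ applies: the partial sum $\sum_{k\in F,\,k\leq i}g_k$ has discontinuity set contained in $\bigcup_{k\leq i}{\rm Disc}(g_k)$, which is disjoint from ${\rm Disc}(g_{i+1})$, so each addition step preserves convergence. This gives $\sum_{k\in F}g_k^{(n)}\to\sum_{k\in F}g_k$ in $D(\mr)$, i.e.\ $\phi_c(m_n)\to\phi_c(m)$.

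The main obstacle is precisely the failure of continuity of addition in the $J_1$-topology; everything else (stabilization of $F$, shift continuity, and evaluation at continuity points) is routine once Propositions \ref{conv_of_mpp}, \ref{conv_in_d} and Lemma \ref{continuity_lemma1} are in hand. The disjoint-discontinuity condition in the statement is exactly what is needed to invoke the addition-continuity theorem, which also explains why $\phi_c$ requires a stronger hypothesis than $\phi_c^{(l)}$: for the latter the values are combined in the trivially additive space $\mr^l$, so only continuity of the individual evaluations at the points $u_j$ is required.
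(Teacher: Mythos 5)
Your proof is correct and follows essentially the same route as the paper's: reduce to a finite, eventually stable index set via Proposition \ref{conv_of_mpp}, apply the shift continuity of Lemma \ref{continuity_lemma1}, evaluate at continuity points for $\phi_c^{(l)}$, and invoke Whitt's theorem on continuity of addition at functions with disjoint discontinuity sets for $\phi_c$. The only differences are cosmetic: you spell out the evaluation-at-a-continuity-point argument and the induction over summands, which the paper leaves implicit, and you cite \cite{Whitt:2002} where the paper cites Theorem 4.1 of \cite{Whitt:1980}.
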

\begin{proof}
Let $c>0$ and suppose that
\begin{equation}\label{lemma_conv4}
\sum_{j\in\mz}\delta_{(t_j^{(n)},f_j^{(n)})}=:m_n\to m=:\sum_{j\in\mz}\delta_{(t_j,f_j)},\quad n\to\infty
\end{equation}
on $(M,\rho_M)$ where $m(\{-c,c\}\times D(\mr))=0$. Let $p,q\in\mn_0$ be such that
\begin{equation}\label{limit_is_also_in_pq}
t_{-p-1}<-c<t_{-p},\quad\quad t_q < c < t_{q+1}.
\end{equation}
From \eqref{lemma_conv4}, using Proposition \ref{conv_of_mpp}, we conclude that for $-p\leq k\leq q$
\begin{equation}\label{lemma_conv41}
(t_k^{(n)},f_k^{(n)})\to (t_k,f_k),\quad n\to\infty
\end{equation}
on $\mr\times D(\mr)$ and also for large enough $n$
\begin{equation}\label{pre_limit_is_also_in_pq}
t^{(n)}_{-p-1}<-c<t^{(n)}_{-p},\quad\quad t^{(n)}_q < c < t^{(n)}_{q+1}.
\end{equation}
By Lemma \ref{continuity_lemma1} convergence \eqref{lemma_conv41} yields
\begin{equation}\label{1221}
f_k^{(n)}(t_k^{(n)}+\cdot)\to f_k(t_k+\cdot),\quad n\to\infty
\end{equation}
on $D(\mr)$.

Now assume that $u_1,\ldots,u_l$ are continuity points of $f_k(t_k+\cdot)$ for all $k\in\mz$, then \eqref{1221} implies that
$$  \big(f^{(n)}_k(t^{(n)}_k+u_1),\ldots,f^{(n)}_k(t^{(n)}_k+u_l)\big)
~\to~   \big(f_k(t_k+u_1),\ldots, f_k(t_k+u_l)\big),\;\;n\to
\infty $$ for $-p\leq k\leq q$. Summation of these relations over
$k=-p,\ldots,q$ proves the continuity of $\phi_c^{(l)}$ in view of
\eqref{limit_is_also_in_pq} and \eqref{pre_limit_is_also_in_pq}.

Theorem 4.1 in \cite{Whitt:1980} tells us that addition on
$D(\mr)\times D(\mr)$ is continuous at those $(x,y)$ for which
${\rm Disc}(x)\cap {\rm Disc}(y)=\varnothing$. Since this
immediately extends to any finite number of summands we conclude
that relations \eqref{1221} entail $$ \phi_c(m_n)=\sum_{k=-p}^{q}
f_k^{(n)}(t_k^{(n)}+\cdot) \ \to \ \sum_{k=-p}^{q}
f_k(t_k+\cdot)=\phi_c(m),\;\;n\to\infty$$ on $D(\mr)$ provided
that ${\rm Disc}(f_k(t_k+\cdot))\cap {\rm
Disc}(f_j(t_j+\cdot))=\varnothing$ for $k\neq j$.
\end{proof}
\begin{rem}
The assumption that $m$ is simple could have been omitted at the expense of a more involved proof. 
The present version of Lemma \ref{continuity_lemma2} serves our
needs.
\end{rem}

The next lemma relates the integrability of $X$ to pathwise
properties of $Y^{\ast}$, the stationary process with immigration.

\begin{lem}\label{limit_process_properties}
Assume that $\me \xi<\infty$ and that the law of $\xi$ is
nonlattice. 
\begin{itemize}
\item[(i)] If $G(t)=\me[|X(t)|\wedge 1]$ is Lebesgue integrable on $[0,\infty)$, then
$|Y^{\ast}(u)|<\infty$ for every $u\in\mr$ almost surely.
\item[(ii)] If, for some (hence all) $\varepsilon>0$, the function $H_{\varepsilon}(t)=\me[\sup_{u\in[t,t+\varepsilon]}|X(u)|\wedge 1]$ is dRi
on $[0,\infty)$, then $Y^{\ast}(u)$ takes values in $D(\mr)$
almost surely.
\end{itemize}
\end{lem}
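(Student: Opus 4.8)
The plan is to prove both parts by controlling, in expectation, a suitably truncated version of the series over the nonnegative-index points, exploiting that the nonnegative points of the stationary renewal process are exactly $\{S_k^\ast:k\ge 0\}$ and carry intensity $\mu^{-1}$ times Lebesgue measure (formula \eqref{eq:intensity=Lebesgue}), and transferring the coupling between $X_k$ and $\xi_k$ onto the integrand by conditioning. Throughout, the indices $k\le -1$ cause no trouble: since $S_k^\ast\to-\infty$, for each fixed $u$ (and for each compact interval in part (ii)) only finitely many of them give a nonzero contribution, so that part is always a genuine finite sum.

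For part (i) the key step is to bound $\me\big[\sum_{k\ge 1}(|X_k(u+S_k^\ast)|\wedge 1)\big]$. I would use $S_k^\ast=S_{k-1}^\ast+\xi_k$ together with the independence of the pair $(X_k,\xi_k)$ from $S_{k-1}^\ast$: conditioning on $S_{k-1}^\ast$ writes the $k$-th expectation as $\int \tilde G(u+s)\,\mmp(S_{k-1}^\ast\in\mathrm{d}s)$, where $\tilde G(t):=\me[|X(t+\xi)|\wedge 1]$. Summing over $k\ge 1$ turns $\sum_{j\ge 0}\mmp(S_j^\ast\in\cdot)$ into $\mu^{-1}$ times Lebesgue measure on $[0,\infty)$ by \eqref{eq:intensity=Lebesgue}, giving the bound $\mu^{-1}\int_0^\infty\tilde G(u+s)\,\mathrm{d}s=\mu^{-1}\me\big[\int_{u+\xi}^\infty(|X(r)|\wedge1)\,\mathrm{d}r\big]\le \mu^{-1}\int_0^\infty G(r)\,\mathrm{d}r<\infty$, after Tonelli's theorem and the support property $X(r)=0$ for $r<0$. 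Finiteness of this expectation forces a.s.\ finiteness of $\sum_{k\ge1}(|X_k(u+S_k^\ast)|\wedge1)$; since this in turn entails that only finitely many summands exceed $1$ while the remaining ones are absolutely summable, the untruncated series $\sum_{k\ge0}X_k(u+S_k^\ast)$ converges absolutely, whence $|Y^\ast(u)|<\infty$.

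For part (ii) the goal is to upgrade pointwise finiteness to paths in $D(\mr)$. On an arbitrary compact $[a,b]$ only the a.s.\ finitely many indices with $S_k^\ast\in[-b,M]$ contribute to the partial sum $P_M(u):=\sum_{k:\,S_k^\ast\le M}X_k(u+S_k^\ast)$, so each $P_M$ is a finite sum of shifts of elements of $D(\mr)$ and hence lies in $D[a,b]$ (using Lemma \ref{continuity_lemma1} for the shift and the stability of $D[a,b]$ under finite sums). Since a uniform limit of functions in $D[a,b]$ again lies in $D[a,b]$, it suffices to show $\sup_{u\in[a,b]}|Y^\ast(u)-P_M(u)|\to 0$ a.s.\ as $M\to\infty$. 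Covering $[a,b]$ by finitely many intervals of length $\varepsilon$, this reduces to a.s.\ finiteness of $\sum_{k\ge0}\sup_{u\in[c,c+\varepsilon]}(|X_k(u+S_k^\ast)|\wedge1)$, which I would again establish in expectation by repeating the conditioning argument of part (i) with $\tilde H_\varepsilon(t):=\me[\sup_{v\in[t+\xi,\,t+\xi+\varepsilon]}(|X(v)|\wedge1)]$ in place of $\tilde G$. Here the monotonicity identity $\sup_v(|X(v)|\wedge1)=(\sup_v|X(v)|)\wedge1$ lets me recognize $\me[\sup_{v\in[r,r+\varepsilon]}(|X(v)|\wedge1)]=H_\varepsilon(r)$, so the intensity formula yields the bound $\mu^{-1}\int_{-\varepsilon}^\infty H_\varepsilon(r)\,\mathrm{d}r$, finite because $H_\varepsilon$ is dRi (hence Lebesgue integrable) on $[0,\infty)$ and bounded by $1$. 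As in part (i), a.s.\ finiteness of the full series then makes the tail over $\{S_k^\ast>M\}$ eventually consist of only small terms, so that $P_M\to Y^\ast$ uniformly on $[a,b]$; as $[a,b]$ is arbitrary, $Y^\ast\in D(\mr)$ a.s.

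The main obstacle is the dependence of the mark $X_k$ on the interarrival time $\xi_k$, which is itself a summand of $S_k^\ast$; this is exactly the feature that distinguishes the present setting from the independent case and blocks a direct application of the intensity formula to the integrand $G$ (respectively $H_\varepsilon$). The device that resolves it is to condition on $S_{k-1}^\ast$, which is independent of $(X_k,\xi_k)$, thereby absorbing the coupling into the shifted functionals $\tilde G$ and $\tilde H_\varepsilon$, and only afterwards to unfold the $\xi$-shift by Tonelli's theorem so as to return to the hypotheses on $G$ and $H_\varepsilon$. A secondary technical point in (ii) is the passage from summability of the $\wedge1$-truncated suprema to genuine uniform convergence of the untruncated series, which I would handle by separating the finitely many large terms from the absolutely summable small ones.
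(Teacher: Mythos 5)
Your proposal is correct, and its skeleton coincides with the paper's: treat the indices $k\le -1$ as an a.s.\ finite sum, and for $k\ge 1$ control the $\wedge 1$-truncated series in expectation by conditioning on $S_{k-1}^\ast$ (which is independent of the pair $(X_k,\xi_k)$, thereby absorbing the $X$--$\xi$ coupling into a shifted functional) and then applying the intensity formula \eqref{eq:intensity=Lebesgue}; in part (ii) both arguments reduce membership in $D(\mr)$ to a.s.\ locally uniform convergence of the series. Where you genuinely diverge is in how the integrability hypotheses on $G$ and $H_\varepsilon$ are transferred across the $\xi$-shift. The paper first proves equivalence statements: Lebesgue integrability of $G$ iff that of $\widehat G(t)=\me[|X(\xi+t)|\wedge 1]$, and dRi of $H_\varepsilon$ iff dRi of $\widehat H_\varepsilon$ iff the unit-interval condition \eqref{series_convergence}, the latter leaning on formulas (3.2), (3.3) and (5.9) of \cite{Iksanov+Marynych+Meiners:2015-2}; its main estimate in (ii) then bounds an integral over $\mr$ by sums of suprema over unit intervals. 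You instead bound $\int_0^\infty \tilde G(u+s)\,{\rm d}s$ (resp.\ $\int_0^\infty \tilde H_\varepsilon(c+s)\,{\rm d}s$) directly by $\int_0^\infty G$ (resp.\ $\int_{-\varepsilon}^\infty H_\varepsilon$) via Tonelli, a change of variables, and the support property $X\equiv 0$ on $(-\infty,0)$, and you choose the covering intervals of $[a,b]$ to have length exactly $\varepsilon$ so that the window of the supremum matches that of $H_\varepsilon$. This buys a more self-contained proof: part (ii) uses only Lebesgue integrability of $H_\varepsilon$ (which dRi implies) and avoids the equivalence machinery and the citations to the companion paper, while the paper's route makes those equivalences explicit because they are reused elsewhere in its argument. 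Two small points you leave implicit but which are standard and unproblematic: the single term $k=0$ cannot be handled by the conditioning step (since $X_0$ depends on $\xi_0$, hence on $S_{-1}^\ast$ and $S_0^\ast$), but being one a.s.\ finite term it is harmless; and passing from summability of the truncated suprema to that of the untruncated ones uses that c\`adl\`ag paths are bounded on compacts, so the finitely many exceptional terms are finite.
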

\begin{proof}[Proof of (i)]
Put 
$\widehat{G}(t):=\me[|X(\xi+t)|\wedge 1]$ and note that
\begin{eqnarray*}
\int_0^{\infty}\me[|X(\xi+t)|\wedge 1]{\rm d}t=\me\int_{0}^{\infty}[|X(t)|\wedge 1]{\rm d}t-\me\int_0^{\xi}[|X(t)|\wedge 1]{\rm d}t.
\end{eqnarray*}
Since $\me\int_0^{\xi}[|X(t)|\wedge 1]{\rm d}t<\me\xi<\infty$ we see that the Lebesgue integrability of $G$ is equivalent to the Lebesgue integrability of $\widehat{G}$. Fix $u\in\mr$ and set $ \mathcal{Z}_k:=X_{k}(u+S_k^\ast)\1_{\{S_k^\ast\geq -u\}}$, $k\in\mn$. We have
\begin{eqnarray*}
\sum_{k \in \mn} \me [|\mathcal{Z}_k|\wedge 1] & = &
\sum_{k \in \mn} \me [(|X_{k}(\xi_k+u+S_{k-1}^\ast)|\wedge 1)\1_{\{S_{k}^\ast\geq-u\}}]    \notag \\
&=&\sum_{k \in \mn} \me [(|X_{k}(\xi_k+u+S_{k-1}^\ast)|\wedge 1)\1_{\{S_{k-1}^\ast\geq-u\}}]    \notag \\
&+&\sum_{k \in \mn} \me [(|X_{k}(\xi_k+u+S_{k-1}^\ast)|\wedge 1)\1_{\{S_{k}^\ast\geq-u>S_{k-1}^{\ast}\}}]    \notag \\
&\leq&\sum_{k \in \mz} \me [\widehat{G}(u+S_{k-1}^\ast)]\1_{\{S_{k-1}^\ast\geq-u\}}]+\sum_{k \in \mn} \mmp\{S_{k}^\ast\geq-u>S_{k-1}^{\ast}\}    \notag \\  \notag \\
&=&\frac{1}{\mu}\int_0^{\infty}\widehat{G}(s){\rm d}s+\mmp\{-u>S_0^{\ast}\}
\end{eqnarray*}
having utilized \eqref{eq:intensity=Lebesgue} for the last equality and independence of $X_k$ and $S^{\ast}_{k-1}$ for $k\in\mn$. Therefore the following two series
$$
\sum_{k\in\mn}\mmp\{|\mathcal{Z}_k|\geq 1\}\quad\text{and}\quad
\sum_{k\in\mn}\me (|\mathcal{Z}_k|\1_{\{|\mathcal{Z}_k|\leq 1\}})
$$
converge. The convergence of the first series together with the
Borel-Cantelli lemma implies that $|\mathcal{Z}_k|\geq 1$ for only
finitely many $k$ a.s., while the convergence of the
second implies that
$\sum_{k\in\mn}|\mathcal{Z}_k|\1_{\{|\mathcal{Z}_k|\leq
1\}}<\infty$ a.s. Hence $\sum_{k\in\mn}| \mathcal{Z}_k|
< \infty$ a.s. and $|Y^\ast(u)|<\infty$ a.s.
since $\sum_{k\leq 0}\mathcal{Z}_k$ contains only finitely many
summands for every fixed $u\in\mr$.

{\it Proof of (ii).} Again we start with the proof that dRi of $H_{\varepsilon}$ is equivalent to dRi of $\widehat{H}_{\varepsilon}(t):=\me[\sup_{u\in[t,t+\varepsilon]}|X(\xi+u)|\wedge 1]$. Note that $X(\xi+\cdot)$ with probability one takes values in $D(\mr)$, hence, using exactly the same arguments as in the second paragraph of Section 3 in \cite{Iksanov+Marynych+Meiners:2015-2}, we conclude that dRi of $\widehat{H}_{\varepsilon}$ is equivalent to
\begin{equation}\label{series_convergence}
\sum_{k\geq 0}\me \Big[\sup_{t\in[k,k+1]}(|X(\xi+t)|\wedge 1)\Big]<\infty.
\end{equation}
We have\footnote{We use the notation $\lfloor x \rfloor:=\sup\{k\in\mz : k\leq x\}$, $x\in\mr$.}
\begin{eqnarray*}
&&\hspace{-2cm}\sum_{k\geq 0}\me \Big[\sup_{t\in[k,k+1]}(|X(\xi+t)|\wedge 1)\Big]=\sum_{k\geq 0}\me \Big[\sup_{t\in[\xi+k,\xi+k+1]}(|X(t)|\wedge 1)\Big]\\
&\leq&\sum_{k\geq 0}\me \Big[\sup_{t\in[\lfloor\xi\rfloor+k,\lfloor\xi\rfloor+k+2]}(|X(t)|\wedge 1)\Big]=\me \sum_{k\geq \lfloor\xi\rfloor}\Big[\sup_{t\in[k,k+2]}(|X(t)|\wedge 1)\Big]\\
&=&\me \sum_{k\geq 0}\Big[\sup_{t\in[k,k+2]}(|X(t)|\wedge 1)\Big]-\me \sum_{k=0}^{\lfloor\xi\rfloor-1}\Big[\sup_{t\in[k,k+2]}(|X(t)|\wedge 1)\Big]\\
\end{eqnarray*}
Since $\me \sum_{k=0}^{\lfloor\xi\rfloor-1}\Big[\sup_{t\in[k,k+2]}(|X(t)|\wedge 1)\Big]\leq \me \lfloor\xi\rfloor\leq \me\xi<\infty$ we conclude that
\eqref{series_convergence} is equivalent to
$$
\me \sum_{k\geq 0}\Big[\sup_{t\in[k,k+2]}(|X(t)|\wedge
1)\Big]<\infty
$$
which, in its turn, is equivalent to dRi of $H_{\varepsilon}$ (see
formulae (3.2) and (3.3) in
\cite{Iksanov+Marynych+Meiners:2015-2}). Hence
\eqref{series_convergence} holds.

In order to show that $Y^{\ast}$ takes values in $D(\mr)$ with
probability one, we use the fact that locally uniform limits of
elements from $D(\mr)$ are again in $D(\mr)$. Therefore it is
enough to check that the series
$Y^\ast(u)=\sum_{k\in\mz}X_{k}(u+S_k^\ast)$ converges uniformly on
every compact interval a.s. which is a consequence of 
the a.s. convergence of the series
\begin{equation}\label{series_converges_uniformly}
\sum_{k\in\mz} \sup_{u \in [a,b]} |X_{k}(u+S_{k}^\ast)|
\end{equation}
for any fixed $a<b$. Arguing in the same vein as in the proof of
part (i) above, we see that it suffices to show
\begin{equation*}
\me\sum_{k\in\mz} \sup_{u \in [a,b]} (|X_{k}(\xi_k+u+S_{k-1}^\ast)| \wedge 1)<\infty,
\end{equation*}
In view of the independence of $X_k$ and $S^{\ast}_{k-1}$ for $k\in\mn$, we observe that
\begin{eqnarray}
&&\me\bigg[\sum_{k\in\mn} \sup_{u \in [a,b]} (|X_{k}(\xi_k+u+S_{k-1}^\ast)| \wedge 1)\bigg]\notag \\
&&\hspace{3cm}=\me\bigg[ \sum_{k\in\mn}  \me\bigg[\sup_{u \in [a,b]} (|X(\xi+u+S_{k-1}^\ast)| \wedge 1) \,\Big|\, S^\ast_{k-1}\bigg]\bigg]   \notag  \\
&&\hspace{3cm}\leq \me\bigg[ \sum_{k\in\mz}  \me\bigg[\sup_{u \in [a,b]} (|X(\xi+u+S_{k-1}^\ast)| \wedge 1) \,\Big|\, S^\ast_{k-1}\bigg]\bigg]\notag \\
&&\hspace{3cm}=\frac{1}{\mu} \int_{\mr} \me\bigg[\sup_{u \in [a,b]} (|X(\xi+u+t)| \wedge 1) \bigg]\, {\rm d}t  \notag  \\
&&\hspace{3cm}\leq\frac{1}{\mu} \sum_{k\in\mz}\sup_{t\in[k,\,k+1)} \me \bigg[\sup_{u\in[a+t,\,b+t]}|X(\xi+u)|\wedge 1\bigg] \notag \\
&&\hspace{3cm}\leq {\rm const}\sum_{k\in\mz} \me
\bigg[\sup_{u\in[k,\,k+1)}|X(\xi+u)|\wedge 1\bigg],
\label{eq:sup_k sup_a,b 1st}
\end{eqnarray}
where the last equality is a consequence of \eqref{eq:intensity=Lebesgue} and the last inequality follows from (5.9) in \cite{Iksanov+Marynych+Meiners:2015-2}.
The last series converges in view of \eqref{series_convergence}. It remains to note that
$$
\me\sum_{k<0} \sup_{u \in [a,b]} (|X_{k}(\xi_k+u+S_{k-1}^\ast)| \wedge 1)\leq \sum_{k<0} \mmp\{b+S_k^{\ast}\geq 0\}<\infty.
$$
Thus $\sum_{k\in\mz} X_{k}(u+S_k^\ast)$ converges uniformly on $[a,b]$ for all $a<b$ a.s. and therefore is $D(\mr)$-valued a.s.
\end{proof}

\begin{lem}\label{lemma_no_common_disc}
If \eqref{no_common_disc_ass} holds, then
$$
\mmp\{{\rm Disc}(X_i(S_i^{\ast}+\cdot))\cap {\rm Disc}(X_j(S_j^{\ast}+\cdot))\neq \varnothing\}=0,\quad i,j\in\mz,\quad i> j.
$$
\end{lem}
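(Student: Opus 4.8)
The plan is to translate the event into an equation between sums of interarrival times and then to exploit the independence between the marks $X_i,X_j$ and the increments lying strictly between indices $j$ and $i$. First I would observe that shifting by $S_k^\ast$ gives ${\rm Disc}(X_k(S_k^\ast+\cdot))={\rm Disc}(X_k)-S_k^\ast$, so that the event in question equals
\[
E:=\{\exists\,s_i\in{\rm Disc}(X_i),\ s_j\in{\rm Disc}(X_j):\ s_i-s_j=S_i^\ast-S_j^\ast\}.
\]
Since $\xi_k=S_k^\ast-S_{k-1}^\ast$ for every $k\in\mz$, one has $S_i^\ast-S_j^\ast=\sum_{k=j+1}^{i}\xi_k$ for $i>j$. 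The key simplification is to reparametrise the discontinuities of $X_i$ through its own interarrival time: putting $r_i:=s_i-\xi_i$, so that $r_i\in{\rm Disc}(X_i(\xi_i+\cdot))$, the defining relation becomes $r_i-s_j=\Sigma$, where $\Sigma:=\sum_{k=j+1}^{i-1}\xi_k$ (the empty sum, hence $\Sigma\equiv0$, when $i=j+1$). The summand $\xi_i$ cancels, which is precisely why the $\xi$-shifted discontinuity support $\mathcal{D}_\xi$ becomes attached to the index $i$ while $\mathcal{D}$ stays attached to $j$; this matches the asymmetric definition $\Delta_X=\mathcal{D}_\xi\varominus\mathcal{D}$ and is where the hypothesis $i>j$ is used.

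Next I would isolate the independence. The groups $(X_i,\xi_i)$, $(X_j,\xi_j)$ and $\{\xi_k:\ j<k<i\}$ are mutually independent, because the distinguished index $0$ (carrying the size-biased pair) lies in exactly one of them. Hence the random sets $A:={\rm Disc}(X_i(\xi_i+\cdot))$ and $B:={\rm Disc}(X_j)$ and the variable $\Sigma$ are mutually independent. Integrating out $B$ first: for fixed, a.s.\ countable $A$ and fixed $\Sigma$, the probability $\mmp\{B\cap(A-\Sigma)\neq\varnothing\}$ is positive only if $(A-\Sigma)\cap\mathcal{D}\neq\varnothing$, i.e.\ $A\cap(\mathcal{D}+\Sigma)\neq\varnothing$, since $\mathcal{D}$ is the positive-probability support of $B$ and $A-\Sigma$ is countable. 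Then, at a fixed value $\Sigma=\sigma$ and integrating out $A$ (whose support is $\mathcal{D}_\xi$), the set $\mathcal{D}+\sigma$ being countable forces positivity only when $d+\sigma\in\mathcal{D}_\xi$ for some $d\in\mathcal{D}$, that is $\sigma\in\Delta_X$. Therefore $\mmp(E)\leq\mmp\{\Sigma\in\Delta_X\}$.

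It then remains to prove $\mmp\{\Sigma\in\Delta_X\}=0$, which I would do by splitting the law of $\Sigma$ into its atomic and non-atomic parts. Every summand $\xi_k$ has its atoms in $\mathfrak{A}$ (size-biasing multiplies each point mass of $\xi_0$ by a strictly positive factor, so $\xi_0$ has the same atom locations as $\xi$), whence the atomic part of the law of $\Sigma$ is supported on $<\mathfrak{A}>$; by \eqref{no_common_disc_ass} this set is disjoint from $\Delta_X$, so the atomic part charges $\Delta_X$ with zero mass. For the non-atomic part I would use that a non-atomic measure assigns zero mass to any countable set, so it is enough to know that $\mathcal{D}$ and $\mathcal{D}_\xi$, and hence their difference set $\Delta_X$, are at most countable.

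The hard part is exactly this countability of the fixed-discontinuity sets, which also underpins the conditioning step above. I would prove that for any process $Z$ with paths in $D(\mr)$ the set $\{t:\mmp(Z(t)\neq Z(t-))>0\}$ is at most countable, as follows. For fixed $\varepsilon,\delta>0$ the set $F_{\varepsilon,\delta}:=\{t\in[a,b]:\mmp(|Z(t)-Z(t-)|\geq\varepsilon)\geq\delta\}$ has no limit point: if $t_n\to t^\ast$ with $t_n\in F_{\varepsilon,\delta}$ distinct, then along a monotone subsequence the existence of one-sided limits of $Z$ at $t^\ast$ gives $Z(t_n)-Z(t_n-)\to0$ almost surely, so $\mmp(|Z(t_n)-Z(t_n-)|\geq\varepsilon)\to0$ by bounded convergence, contradicting the lower bound $\delta$. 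Thus $F_{\varepsilon,\delta}$ is finite on every bounded interval, and the full set is a countable union of finite sets. Applying this to $Z=X$ and to $Z=X(\xi+\cdot)$ (both with paths in $D(\mr)$ a.s.) yields countability of $\mathcal{D}$ and $\mathcal{D}_\xi$, closing the argument.
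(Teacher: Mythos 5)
Your argument follows the same route as the paper's proof: absorb $\xi_i$ into the discontinuity set via ${\rm Disc}(X_i(\xi_i+\cdot))$, use the mutual independence of $(X_i,\xi_i)$, $(X_j,\xi_j)$ and $\Sigma:=\sum_{j<k<i}\xi_k$ together with countability to reduce everything to the bound $\mmp(E)\le\mmp\{\Sigma\in\Delta_X\}$, and then kill the atomic part of the law of $\Sigma$ by \eqref{no_common_disc_ass} and the non-atomic part by countability of $\Delta_X$ (your explicit accumulation-point proof that fixed-discontinuity sets are countable is a fact the paper uses only implicitly). However, there is a genuine gap: your reduction fails, as written, in the cases $i=0$ and $j=0$, which the lemma covers since $i,j$ range over all of $\mz$, and which the paper treats separately for precisely this reason.

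By \eqref{zero_interval_joint_distribution} the pair $(X_0,\xi_0)$ is size-biased, so $X_0$ does \emph{not} have the law of $X$. Hence, when $j=0$, the positive-probability support of $B={\rm Disc}(X_0)$ is $\mathcal{D}^{(0)}:=\{t\in\mr:\mmp\{X_0(t)\neq X_0(t-)\}>0\}$ rather than $\mathcal{D}$, so your justification ``since $\mathcal{D}$ is the positive-probability support of $B$'' is incorrect there; likewise, when $i=0$, the support of $A={\rm Disc}(X_0(\xi_0+\cdot))$ is $\mathcal{D}^{(0)}_\xi$ rather than $\mathcal{D}_\xi$. You did register the special role of index $0$, but only where it is harmless: in the independence grouping, and for the atom locations of $\xi_0$ (which matters when $j<0<i$, i.e.\ when $\xi_0$ is a summand of $\Sigma$). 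When $0\in\{i,j\}$, the size-biasing sits in the marks themselves, and then the inequality $\mmp(E)\le\mmp\{\Sigma\in\Delta_X\}$ --- the only point where hypothesis \eqref{no_common_disc_ass}, which concerns $\Delta_X=\mathcal{D}_\xi\varominus\mathcal{D}$, can be brought to bear --- is no longer justified. The missing step is the pair of inclusions $\mathcal{D}^{(0)}\subset\mathcal{D}$ and $\mathcal{D}^{(0)}_\xi\subset\mathcal{D}_\xi$, which hold because the law of $(X_0,\xi_0)$ is absolutely continuous with respect to that of $(X,\xi)$ (density $y/\mu$, using $\mmp\{\xi>0\}=1$), so that null events for $(X,\xi)$ remain null for $(X_0,\xi_0)$. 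This is exactly the portion of the paper's proof beginning ``The cases $i=0$ or $j=0$ require a separate treatment''; once these inclusions are inserted, your argument closes.
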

    \begin{proof}
Define the following random sets
$$
D^{(i)}_{\xi}:={\rm Disc}(X_i(\xi_i+\cdot)),\quad D^{(j)}:={\rm Disc}(X_j(\cdot)),\quad D^{(i,j)}:=D^{(i)}_{\xi}\varominus D^{(j)},\quad i,j\in\mz
$$
and observe that for every $t\in\mr$ the events $\{t\in D^{(i)}_{\xi}\}$, $\{t\in D^{(j)}\}$ and $\{t\in  D^{(i,j)}\}$ are measurable.
Put
\begin{eqnarray*}
&&\hspace{-2cm}\mathcal{D}^{(i)}_{\xi}:=\{t \in\mr : \mmp\{t\in D^{(i)}_{\xi}\}>0\},\quad \mathcal{D}^{(j)}:=\{t \in\mr : \mmp\{t\in D^{(j)}\}>0\},\\
&&\mathcal{D}^{(i,j)}:=\{t \in\mr : \mmp\{t\in D^{(i,j)}\}>0\},\quad i,j\in\mz.
\end{eqnarray*}
Note that by definition
\begin{equation}\label{is_null}
\mmp\{t\in D^{(i)}_{\xi}\setminus\mathcal{D}_i\}=\mmp\{t\in D^{(j)}\setminus\mathcal{D}_j\}=\mmp\{t\in D^{(i,j)}\setminus\mathcal{D}^{(i,j)}\}=0,\quad i,j\in\mz,
\end{equation}
for every fixed $t\in\mr$. We have for $i>j$
\begin{eqnarray*}
&&\hspace{-0.7cm}\mmp\{{\rm Disc}(X_i(S_i^{\ast}+\cdot))\cap {\rm Disc}(X_j(S_j^{\ast}+\cdot))\neq\varnothing\}\\
&&=\mmp\{\text{there exists }u(\omega)\text{ such that }S_i^{\ast}+u(\omega)\in {\rm Disc}(X_i(\cdot)),S_j^{\ast}+u(\omega)\in {\rm Disc}(X_j(\cdot))\}\\
&&\leq \mmp\{S_i^{\ast}-S_j^{\ast}\in {\rm Disc}(X_i(\cdot))\varominus {\rm Disc}(X_j(\cdot))\}\\
&&=\mmp\{\xi_i+\ldots+\xi_{j+1}\in {\rm Disc}(X_i(\cdot))\varominus {\rm Disc}(X_j(\cdot))\}\\
&&=\mmp\{\xi_{i-1}+\ldots+\xi_{j+1}\in {\rm Disc}(X_i(\xi_i+\cdot)\varominus {\rm Disc}(X_j(\cdot))\}\\
&&=\mmp\{\xi_{i-1}+\ldots+\xi_{j+1}\in D^{(i,j)}\}\\
&&\leq \mmp\{\xi_{i-1}+\ldots+\xi_{j+1}\in \mathcal{D}^{(i,j)}\}+\mmp\{\xi_{i-1}+\ldots+\xi_{j+1}\in D^{(i,j)}\setminus\mathcal{D}^{(i,j)}\},
\end{eqnarray*}
Since $\xi_{j+1},\ldots,\xi_{i-1}$ are independent of $D^{(i,j)}$,
the second term vanishes in view of \eqref{is_null}.

Pick an arbitrary $t_0\in \mathcal{D}^{(i,j)}$. By definition we have
\begin{eqnarray*}
0&<&\mmp\{t_0\in{\rm Disc}(X_i(\xi_i+\cdot))\varominus {\rm Disc}(X_j(\cdot))\}\\
&\leq& \1_{\{t_0\in\mathcal{D}^{(i)}_{\xi}\varominus\mathcal{D}^{(j)}\}}+\mmp\{t_0\in({\rm Disc}(X_i(\xi_i+\cdot))\setminus \mathcal{D}^{(i)}_{\xi})\varominus ({\rm Disc}(X_j(\cdot))\setminus \mathcal{D}^{(j)})\}\\
&+&\mmp\{t_0\in{\rm Disc}(X_i(\xi_i+\cdot))\varominus ({\rm Disc}(X_j(\cdot))\setminus \mathcal{D}^{(j)})\}\\
&+&\mmp\{t_0\in({\rm Disc}(X_i(\xi_i+\cdot))\setminus \mathcal{D}^{(i)}_{\xi})\varominus {\rm Disc}(X_j(\cdot))\}.
\end{eqnarray*}

The last three probabilities equal zero in view of the first two
equalities in \eqref{is_null}, the independence of
$X_i(\xi_i+\cdot)$ and $X_j(\cdot)$ and the fact that both
$D^{(j)}$ and $D_{\xi}^{(i)}$ are at most countable a.s.
Hence
$0<\1_{\{t_0\in\mathcal{D}^{(i)}_{\xi}\varominus\mathcal{D}^{(j)}\}}$
which implies
$t_0\in\mathcal{D}^{(i)}_{\xi}\varominus\mathcal{D}^{(j)}$ and
thereupon
$$
\mathcal{D}^{(i,j)}\subset
\mathcal{D}^{(i)}_{\xi}\varominus\mathcal{D}^{(j)}.
$$
Thus, we have proved that
$$
\mmp\{\xi_{i-1}+\ldots+\xi_{j+1}\in \mathcal{D}^{(i,j)}\}\leq \mmp\{\xi_{i-1}+\ldots+\xi_{j+1}\in \mathcal{D}^{(i)}_{\xi}\varominus\mathcal{D}^{(j)}\}.
$$
If $i,j\neq 0$, then $\mathcal{D}^{(i)}_{\xi}=\mathcal{D}_{\xi}$
and $\mathcal{D}^{(j)}=\mathcal{D}$ (see \eqref{d12_def} for the
definition of $\mathcal{D}_{\xi}$ and $\mathcal{D}$), and the
probability on the right-hand side of the last centered inequality
equals $\mmp\{\xi_{i-1}+\ldots+\xi_{j+1}\in \Delta_X\}$. The cases
$i=0$ or $j=0$ require a separate treatment since the joint
distribution of $(X_0,\xi_0)$ is other than that of $(X,\xi)$. We
only treat the situation when $i=0$ and show that
$\mathcal{D}^{(0)}_{\xi}\subset \mathcal{D}_{\xi}$, the other case
being similar. Assume that $t_0\notin \mathcal{D}_{\xi}$ and
consider the set $A_{t_0}:=\{f(\cdot)\in D(\mr): f(t_0)\neq
f(t_0-)\}$. Using \eqref{zero_interval_joint_distribution} we have
\begin{eqnarray*}
\mmp\{X_0(\xi_0+\cdot)\in A_{t_0}\}&=&\int_0^{\infty}\mmp\{X_0(\cdot)\in A_{t_0}\varominus\{y\},\xi_0\in {\rm d}y\}\\
&\overset{\eqref{zero_interval_joint_distribution}}{=}&\frac{1}{\mu}\int_0^{\infty}y\mmp\{\xi\in {\rm d}y,X(\cdot)\in A_{t_0}\varominus\{y\}\}\\
&=&\frac{1}{\mu}\int_0^{\infty}y\mmp\{\xi\in {\rm d}y,X(\xi+\cdot)\in A_{t_0}\}.
\end{eqnarray*}
The probability under the integral sign equals zero identically in
view of $t_0\notin \mathcal{D}_{\xi}$. Hence
$\mmp\{X_0(\xi_0+\cdot)\in A_{t_0}\}=0$ which is equivalent to
$t_0\notin \mathcal{D}_{\xi}^{(0)}$. This shows that
$\mathcal{D}^{(0)}_{\xi}\subset \mathcal{D}_{\xi}$. In the same
vein, we infer $\mathcal{D}^{(0)}\subset \mathcal{D}$ and
therefore
$$
\mmp\{\xi_{i-1}+\ldots+\xi_{j+1}\in
\mathcal{D}^{(i)}_{\xi}\varominus\mathcal{D}^{(j)}\}\leq
\mmp\{\xi_{i-1}+\ldots+\xi_{j+1}\in
\mathcal{D}_{\xi}\varominus\mathcal{D}\}
$$ provided that $i=0$ or $j=0$. Combining pieces together we obtain
$$
\mmp\{{\rm Disc}(X_i(S_i^{\ast}+\cdot))\cap {\rm Disc}(X_j(S_j^{\ast}+\cdot))\neq\varnothing\}\leq \mmp\{\xi_{i-1}+\ldots+\xi_{j+1}\in \Delta_X\}.
$$
If $j\geq 0$ or $i\leq 0$, then $\xi_{i-1}+\ldots+\xi_{j+1}\od
S_{i-j-1}$, whence $\mmp\{\xi_{i-1}+\ldots+\xi_{j+1}\in
\Delta_X\}=0$ by \eqref{no_common_disc_ass}. If $j<0<i$ the latter
equality also holds since the support of the discrete component of
$\xi_0$ is the same as that of $\xi$ by
\eqref{zero_interval_joint_distribution}. The proof of Lemma
\ref{lemma_no_common_disc} is complete.
\end{proof}

With these preparatory results at hand we are ready to prove
Theorem \ref{main1}. \vspace{0.5cm}

\noindent {\it Proof of \eqref{main_relation_func}}. We shall use
Lemma \ref{continuity_lemma2}. To this end, observe that each
$S^{\ast}_j$ has an absolutely continuous distribution. In
particular, $\mathcal{S}^{\mathcal{M}}$ is simple mpp and $\mathcal{S}^{\mathcal{M}}(\{-c,c\}\times D(\mr)) = 0$ a.s. for every $c>0$.
From Lemma \ref{continuity_lemma2} and  Lemma \ref{lemma_no_common_disc}, we see that $\phi_c$ is a.s. continuous at $\sum_{k\in\mz}\delta_{(S_k^{\ast},X_k)}$ and therefore \eqref{conv_of_rmpp} implies
\begin{eqnarray}
Y_c(t,\cdot)&:=&\sum_{k\in\mz}X_{k+1}(t-S_k+\cdot)\1_{\{|t-S_k|\leq c\}}=\phi_c\Big(\sum_{k\in\mz}\delta_{(t-S_{k},X_{k+1})}\Big)\notag \\
&\Rightarrow&
\phi_c\Big(\sum_{k\in\mz}\delta_{(S^{\ast}_{k},X_{k})}\Big)=\sum_{k\in\mz}X_k(S^{\ast}_k+\cdot)\1_{\{|S^{\ast}_k|\leq c\}}=:Y^{\ast}_c(\cdot)\label{func_conv1}.
\end{eqnarray}
on $(D(\mr),d)$.

Using Proposition \ref{conv_in_d} we conclude that in order to prove \eqref{main_relation_func} it
suffices to check that
\begin{equation}\label{main_relation_func12}
Y(t+u) \ \Rightarrow \ Y^\ast(u),\quad t\to\infty
\end{equation}
on $(D[a,b],d_0^{a,b})$ for any $a$ and $b$, $a<b$ which are
not fixed discontinuities of $Y^\ast$. To this end, first
observe that \eqref{func_conv1} implies
\begin{equation}\label{func_conv12}
Y_c(t,\cdot) \ \Rightarrow \ Y^\ast_c(\cdot),\quad t\to\infty
\end{equation}
on $(D[a,b],d_0^{a,b})$ for any $a$ and $b$, $a<b$ which are not
fixed discontinuities of $Y^\ast_c$.

From the proof of Lemma \ref{limit_process_properties} we know that the series which defines $Y^{\ast}(u)$ converges locally uniformly in $u\in\mr$. Hence for every fixed $t_0\in\mr$ we have
\begin{eqnarray}
\mmp\{t_0\in {\rm Disc}(Y^{\ast}(\cdot))\}&=&\mmp\{\text{there exists } k\in\mz\text{ such that }t_0 \in {\rm Disc}(X_k(S_k^{\ast}+\cdot))\}\notag \\
&=&\sum_{k\in\mz}\mmp\{t_0 \in {\rm Disc}(X_k(S_k^{\ast}+\cdot))\}\notag \\
&\geq& \sum_{k\in\mz}\mmp\{t_0 \in {\rm Disc}(X_k(S_k^{\ast}+\cdot)),|S^{\ast}_k|\leq c\}\notag\\
&=&\mmp\{t_0\in{\rm Disc}(Y^{\ast}_c(\cdot))\}\label{all_summands_are continuous}
\end{eqnarray}
where the second and the last equalities follow from Lemma
\ref{lemma_no_common_disc}. The process $Y^{\ast}(\cdot)$ is
stationary and is a.s. $D(\mr)$-valued by Lemma
\ref{limit_process_properties}(ii). Hence $\mmp\{t_0\in{\rm
Disc}(Y^{\ast}_c(\cdot))\}=\mmp\{t_0\in {\rm
Disc}(Y^{\ast}(\cdot))\}=0$ which implies that \eqref{func_conv12}
holds for all $a<b$.

Now \eqref{main_relation_func12}
follows from Theorem 4.2 in \cite{Billingsley:1968} if we can
prove that
\begin{equation}\label{ya_conv} Y^\ast_c \ \Rightarrow \
Y^\ast,\quad c\to\infty
\end{equation}
on $(D[a,b],d_0^{a,b})$ and that
\begin{equation}\label{bill_cond1}
\lim_{c\to\infty}\limsup_{t\to\infty}
\mmp\bigg\{d_0^{a,b}(Y_c(t,\cdot),Y(t+\cdot))>\varepsilon\bigg\}=0
\end{equation}
for all $\varepsilon>0$ and any $a,b\in\mr$, $a<b$. Since $d_0^{a,b}$ is
dominated by the uniform metric on $[a,b]$, relation \eqref{bill_cond1} follows from
\begin{eqnarray*}
&&\lim_{c\to\infty}\limsup_{t\to\infty}
\mmp\bigg\{\sup_{u\in[a,\,b]}\bigg|\sum_{k\geq 0}
X_{k+1}(u+t-S_k)\1_{\{|t-S_k| >c\}}\\
&&\hspace{5cm} +\sum_{k<0}X_{k+1}(u+t-S_k)\1_{\{|t-S_k| \leq c\}}\bigg|>\varepsilon\bigg\}=0
\end{eqnarray*}
for all $\varepsilon>0$ and any $a,b\in\mr$, $a<b$. The latter is
a consequence of 
\begin{equation}    \label{bill_cond111}
\lim_{c\to\infty}\limsup_{t\to\infty}
\mmp\bigg\{\sup_{u\in[a,\,b]}\bigg|\sum_{k\geq 0}
X_{k+1}(u+t-S_k)\1_{\{|t-S_k| >c\}}\bigg|>\varepsilon\bigg\}=0
\end{equation}
and
\begin{equation}    \label{bill_cond112}
\lim_{c\to\infty}\limsup_{t\to\infty}
\mmp\bigg\{\sup_{u\in[a,\,b]}\bigg|\sum_{k<0}X_{k+1}(u+t-S_k)\1_{\{|t-S_k| \leq c\}}\bigg|>\varepsilon\bigg\}=0.
\end{equation}
The proof of \eqref{bill_cond111}, given on p.~14 of \cite{Iksanov+Marynych+Meiners:2015-2}, works without any changes since $X_{k+1}$ and $S_k$ are independent for $k\geq 0$. The relation \eqref{bill_cond112} is trivial since
\begin{equation}\label{neg_term}
\sum_{k<0}X_{k+1}(u+t-S_k)\1_{\{|t-S_k| \leq c\}}=0
\end{equation}
for $t>c>0$.

As for relation \eqref{ya_conv} we claim an even stronger
statement
\begin{equation}\label{as_conv_func}
\sup_{u\in [a,b]}|Y^\ast_c(u)-Y^{\ast}(u)|\to 0,\quad c\to\infty
\end{equation}
a.s. for all fixed $a,b$. Indeed,
$$
\sup_{u\in [a,b]}|Y^\ast_c(u)-Y^{\ast}(u)|\leq \sum_{k\in\mz}\sup_{u\in[a,b]}|X_k(u+S^{\ast}_k)|\1_{\{|S^{\ast}_k|>c\}}
$$
Invoking the monotone convergence theorem we deduce that the
right-hand side tends to zero as $c\to\infty$ in view of
\eqref{series_converges_uniformly}.

\noindent {\it Proof of \eqref{main_relation}}. Fix $l\in\mn$ and
real numbers $\alpha_1,\ldots,\alpha_l$ and $u_1,\ldots,u_l$.
According to Lemma \ref{continuity_lemma2}, for every $c>0$, the
mapping $\phi^{(l)}_c$ is continuous at
$\big(\sum_{k\in\mz}\delta_{S^\ast_k},(X_{k+1})_{k\in\mz}\big)$
a.s. Now apply the continuous mapping theorem to
\eqref{rmpp_conv} twice (first using the map $\phi^{(l)}_c$ and
then the map $(x_1,\ldots,x_l) \mapsto \alpha_1 x_1 + \ldots +
\alpha_l x_l$) to obtain
\begin{equation*}
\sum_{i=1}^l \alpha_i Y_c(t,u_i) \ \dod \
\sum_{i=1}^l\alpha_iY_c^\ast(u_i), \ \ t\to\infty.
\end{equation*}
The proof of \eqref{main_relation} is complete if we verify
\begin{equation}\label{ya_conv_3}
\sum_{i=1}^l \alpha_i Y^\ast_c(u_i) \ \dod \ \sum_{i=1}^l \alpha_i Y^\ast(u_i),\;\;c\to\infty
\end{equation}
and
\begin{eqnarray}
&&\lim_{c\to\infty} \limsup_{t\to\infty} \mmp\bigg\{\bigg|\sum_{i=1}^l \Big(\alpha_i \sum_{k\geq 0}X_{k+1}(u_i+t-S_k)\1_{\{|t-S_k| > c\}}\notag \\
&&\hspace{5cm}+\sum_{k<0}X_{k+1}(u_i+t-S_k)\1_{\{|t-S_k| \leq c\}}\Big)\bigg|>\varepsilon\bigg\}=0\label{bill_cond_4}
\end{eqnarray}
for all $\varepsilon>0$. As for \eqref{ya_conv_3}, an even
stronger statement holds: $Y^\ast_c(u) \to Y^\ast(u)$ as $c \to
\infty$ a.s. for all $u \in \mr$. This can be checked in
the same way as \eqref{as_conv_func} using the fact (see Lemma
\ref{limit_process_properties}(i)) that
\begin{eqnarray*}
\me\bigg[\sum_{k \in \mz} |X_{k}(u+S_k^\ast)| \wedge 1\bigg]  ~<~ \infty.
\end{eqnarray*}
Further, \eqref{bill_cond_4} is a consequence of
$$\lim_{c\to\infty}\limsup_{t\to\infty}\mmp\bigg\{\Big|\sum_{k\geq 0}X_{k+1}(u+t-S_k)\1_{\{|t-S_k| > c\}}\Big|>\varepsilon\bigg\} = 0$$
which has been checked on p.~15 in
\cite{Iksanov+Marynych+Meiners:2015-2} (the independence between
$X$ and $\xi$ was not used in that proof), and equality
\eqref{neg_term} which holds for $t>c>0$. The proof of Theorem
\ref{main1} is complete.

\section{Appendix}
{\bf Proof of Lemma \ref{rmpp_conv}}. Observe that, for all $t\geq
0$,
$$
\sum_{k\in\mz}\delta_{(t-S_k,X_{k+1})}=\sum_{k\in\mz}\delta_{(t-S_{\nu(t)-k-1},X_{\nu(t)-k})}.
$$
Hence, according to Proposition \ref{conv_of_mpp}, it is enough to
prove that, for all $p,q\in\mn$, as $t\to\infty$,
\begin{equation}\label{lemma_rmpp_conv1}
\Big((t-S_{\nu(t)+p-1},X_{\nu(t)+p}),\ldots,(t-S_{\nu(t)-q-1},X_{\nu(t)-q})\Big)\dod
\Big((S^{\ast}_{-p},X_{-p}),\ldots,(S^{\ast}_{q},X_{q})\Big).
\end{equation}
Let $y,z,y_1,\ldots,y_{p-1},z_1,\ldots,z_q$ be
arbitrary nonnegative numbers and $A_{-p},\ldots,A_{q}$ arbitrary
sets in $\mathcal{B}(D(\mr))$. Define the events
\begin{eqnarray*}
\mathcal{E}_{1}(t)&:=&\{t-S_{\nu(t)-1}< z,t-S_{\nu(t)} \geq -y,X_{\nu(t)}\in A_0\},\\
\mathcal{E}_{2}(t)&:=&\{\xi_{\nu(t)+1}\leq y_1,X_{\nu(t)+1}\in A_{-1},\ldots, \xi_{\nu(t)+p-1}\leq y_{p-1},X_{\nu(t)+p-1}\in A_{-p+1}\},\\
\mathcal{E}_{3}(t)&:=&\{\xi_{\nu(t)-1}\leq z_1,X_{\nu(t)-1}\in A_{1},\ldots,\xi_{\nu(t)-q}\leq z_q,X_{\nu(t)-q}\in A_q\},\\
\mathcal{E}_{4}(t)&:=&\{X_{\nu(t)+p}\in A_{-p}\}.
\end{eqnarray*}
Then \eqref{lemma_rmpp_conv1} is equivalent to
\begin{eqnarray*}
\lim_{t\to\infty}\mmp\{\mathcal{E}_{1}(t)\cap\mathcal{E}_{2}(t)\cap\mathcal{E}_{3}(t)\cap\mathcal{E}_{4}(t)\}&=&\mmp\{S_{-1}^{\ast} \geq  -y,S^{\ast}_0 < z,X_0\in A_0\}\\
&&\hspace{-4cm}\times\mmp\{\xi_{-1}\leq y_{1},X_{-1}\in A_{-1},\ldots,\xi_{-p+1}\leq y_{p-1},X_{-p+1}\in A_{-p+1}\}\\
&&\hspace{-4cm}\times\mmp\{\xi_{1}\leq z_{1},X_{1}\in A_{1},\ldots,\xi_{q}\leq z_q,X_{q}\in A_{q}\}\mmp\{X_{-p}\in A_{-p}\}\\
&&\hspace{-4cm}=\mmp\{S_{-1}^{\ast}\geq -y,S^{\ast}_0 < z,X_0\in A_0\}\mmp\{X\in A_{-p}\}\\
&&\hspace{-4cm}\times\prod_{i=1}^{p-1}\mmp\{\xi\leq y_i,X\in A_{-i}\}\prod_{i=1}^{q}\mmp\{\xi\leq z_i,X\in A_i\}.
\end{eqnarray*}
The probability on the left-hand side can be replaced with
$p(t):=\mmp\{\mathcal{E}_{1}(t)\cap\mathcal{E}_{2}(t)\cap\mathcal{E}_{3}(t)\cap\mathcal{E}_{4}(t),\nu(t)>q\}$,
for $\mmp\{\nu(t)\leq q\}\to 0$ as $t\to\infty$.
Conditioning on $\nu(t)$ we obtain
\begin{eqnarray*}
p(t)&=&\sum_{k\geq q+1}\mmp\{\mathcal{E}_{1}(t)\cap\mathcal{E}_{2}(t)\cap\mathcal{E}_{3}(t)\cap\mathcal{E}_{4}(t),\nu(t)=k\}\\
&=&\sum_{k\geq q+1}\mmp\{\mathcal{E}_{1}(t)\cap\mathcal{E}_{2}(t)\cap\mathcal{E}_{3}(t)\cap\mathcal{E}_{4}(t),S_{k-1}\leq t,S_k>t\}\\
&=&\sum_{k\geq q+1}\mmp\{\mathcal{E}_{1}(t)\cap\mathcal{E}_{3}(t),S_{k-1}\leq t,S_k>t\}\\
&&\hspace{4cm}\times\mmp\{X_{k+p}\in
A_{-p}\}\prod_{i=1}^{p-1}\mmp\{\xi_{k+i}\leq y_i,X_{k+i}\in
A_{-i}\}.
\end{eqnarray*}
Thus, it remains to show that
\begin{eqnarray}
&&\lim_{t\to\infty}\sum_{k\geq q+1}\mmp\{\mathcal{E}_{1}(t)\cap\mathcal{E}_{3}(t),S_{k-1}\leq t,S_k>t\}\label{lemma_rmpp_conv2}\\
&&\hspace{4cm}=\mmp\{-S_{-1}^{\ast}\leq y,S^{\ast}_0 < z,X_0\in A_0\}\prod_{i=1}^{q}\mmp\{\xi\leq z_i,X\in A_i\}.\notag
\end{eqnarray}
We continue as follows
\begin{eqnarray*}
&&\hspace{-0.5cm}\mmp\{\mathcal{E}_{1}(t)\cap\mathcal{E}_{3}(t),S_{k-1}\leq t,S_k>t\}\\
&&=\mmp\{t-z < S_{k-1}\leq t,t<S_k\leq t+y,X_k\in A_0,\\
&&\hspace{1cm}\xi_{k-1}\leq z_1,X_{k-1}\in A_1,\ldots,\xi_{k-q}\leq z_q,X_{k-q}\in A_{q}\}\\
&&=\int_{[0,\,t]}\mmp\{t-z-v < \xi_{k-1}+\ldots+\xi_{k-q}\leq t-v,t-v<\xi_{k}+\ldots+\xi_{k-q}\leq t+y-v,\\
&&\hspace{1cm}X_k\in A_0,\xi_{k-1}\leq z_1,X_{k-1}\in A_1,\ldots,\xi_{k-q}\leq z_q,X_{k-q}\in A_{q}\}\mmp\{S_{k-q-1}\in {\rm d}v\}\\
&&=\int_{[0,\,t]}\mmp\{t-z-v < \xi_{1}+\ldots+\xi_{q}\leq t-v,t-v<\xi_{1}+\ldots+\xi_{q}+\hat{\xi}\leq t+y-v,\\
&&\hspace{1cm}\hat{X}\in A_0,\xi_{1}\leq z_1,X_{1}\in A_1,\ldots,\xi_{q}\leq z_q,X_{q}\in A_{q}\}\mmp\{S_{k-q-1}\in {\rm d}v\},\\
&&=:\int_{[0,\,t]}\hat{p}(t-v)\mmp\{S_{k-q-1}\in {\rm d}v\}
\end{eqnarray*}
where
\begin{eqnarray*}
&&\hat{p}(u):=\mmp\{u-z < S_q \leq u,u<S_q+\hat{\xi}\leq u+y,\hat{X}\in A_0,\\
&&\hspace{6cm}\xi_{1}\leq z_1,X_{1}\in A_1,\ldots,\xi_{q}\leq
z_q,X_{q}\in A_{q}\},
\end{eqnarray*}
and $(\hat{X},\hat{\xi})$ has the same distribution as $(X,\xi)$
and is independent of everything else.

The nonnegative function $\hat{p}(u)$ is dRi because it is locally
Riemann integrable and bounded from above by a nonincreasing
integrable function $u\mapsto \mmp\{S_q+\hat{\xi}>u\}$. 
By the key renewal theorem, we infer
\begin{eqnarray*}
\sum_{k\geq q+1}\mmp\{\mathcal{E}_{1}(t)\cap\mathcal{E}_{3}(t),S_{k-1}\leq t,S_k>t\}&=&\sum_{k\geq q+1}\int_{[0,\,t]}\hat{p}(t-v)\mmp\{S_{k-q-1}\in {\rm d}v\}\\
&\to&\frac{1}{\mu}\int_0^{\infty}\hat{p}(u){\rm d}u
\end{eqnarray*}
as $t\to\infty$. Hence proving \eqref{lemma_rmpp_conv2} amounts to
checking the equality
\begin{equation}\label{lemma_rmpp_conv3}
\mmp\{-S_{-1}^{\ast}\leq y,S^{\ast}_0 < z,X_0\in A_0\}\prod_{i=1}^{q}\mmp\{\xi\leq z_i,X\in A_i\}=\frac{1}{\mu}\int_0^{\infty}\hat{p}(u){\rm d}u.
\end{equation}
Set 
$\mathcal{E}:=\{\xi_{1}\leq z_1,X_{1}\in A_1,\ldots,\xi_{q}\leq
z_q,X_{q}\in A_{q}\}$ and rewrite $\hat{p}(u)$ as follows:
\begin{eqnarray*}
\hat{p}(u)&=&\mmp\{u-z\wedge \hat{\xi} < S_q \leq u-(\hat{\xi}-y)_{+},z\wedge \hat{\xi} \geq (\hat{\xi}-y)_{+},\hat{X}\in A_0,\mathcal{E}\}\\
&=&\mmp\{S_q+z\wedge \hat{\xi}>u,z\wedge \hat{\xi} \geq (\hat{\xi}-y)_{+},\hat{X}\in A_0,\mathcal{E}\}\\
&-&\mmp\{S_q+(\hat{\xi}-y)_{+}>u,z\wedge \hat{\xi} \geq (\hat{\xi}-y)_{+},\hat{X}\in A_0,\mathcal{E}\}.
\end{eqnarray*}
Then 
\begin{eqnarray*}
\frac{1}{\mu}\int_0^{\infty}\hat{p}(u){\rm d}u&=&\frac{1}{\mu}\me \Big((S_q+z\wedge \hat{\xi})\1_{\{z\wedge \hat{\xi} \geq (\hat{\xi}-y)_{+},\hat{X}\in A_0,\mathcal{E}\}}\\
&&\hspace{4cm}-(S_q+(\hat{\xi}-y)_{+})\1_{\{z\wedge \hat{\xi} \geq (\hat{\xi}-y)_{+},\hat{X}\in A_0,\mathcal{E}\}}\Big)\\
&=&\frac{1}{\mu}\me\Big(z\wedge \hat{\xi} - (\hat{\xi}-y)_{+}\Big)_{+}\1_{\{\hat{X}\in A_0,\mathcal{E}\}}\\
&=&\frac{1}{\mu}\me\Big(z\wedge \hat{\xi} - (\hat{\xi}-y)_{+}\Big)_{+}\1_{\{\hat{X}\in A_0\}}\mmp\{\mathcal{E}\}\\
&=&\frac{1}{\mu}\me\Big(z\wedge \hat{\xi} - (\hat{\xi}-y)_{+}\Big)_{+}\1_{\{\hat{X}\in A_0\}}\prod_{i=1}^{q}\mmp\{\xi\leq z_i,X\in A_i\},
\end{eqnarray*}
where we have used that $\mathcal{E}$ is independent of
$(\hat{X},\hat{\xi})$. Applying formula
\eqref{zero_interval_joint_distribution} we 
arrive at
\begin{eqnarray*}
\mmp\{-S_{-1}^{\ast}\leq y,S^{\ast}_0 < z,X_0\in A_0\}&=&\mmp\{(1-U)\xi_0\leq y,U\xi_0 < z,X_0\in A_0\}\\
&=&\int_0^{1}\mmp\{\xi_0\leq y(1-s)^{-1},\xi_0 < zs^{-1},X_0\in A_0\}{\rm d}s\\
&\overset{\eqref{zero_interval_joint_distribution}}{=}&\frac{1}{\mu}\int_0^{1}\int_{[0,\,y(1-s)^{-1}\wedge zs^{-1}]}t\mmp\{\hat{\xi}\in {\rm d}t,\hat{X}\in A_0\}{\rm d}s\\
&=&\frac{1}{\mu}\int_{[0,\,\infty)}\Big(\int_{(1-y/t)_{+}}^{z/t\wedge 1} t{\rm d}s\Big)\mmp\{\hat{\xi}\in {\rm d}t,\hat{X}\in A_0\}\\
&=&\frac{1}{\mu}\int_{[0,\,\infty)}\Big(z\wedge t-(t-y)_{+}\Big)_{+}\mmp\{\hat{\xi}\in {\rm d}t,\hat{X}\in A_0\}\\
&=&\frac{1}{\mu}\me\Big(z\wedge \hat{\xi} -
(\hat{\xi}-y)_{+}\Big)_{+}\1_{\{\hat{X}\in A_0\}},
\end{eqnarray*}
and \eqref{lemma_rmpp_conv3} follows. The proof of Lemma
\ref{rmpp_conv} is complete.

\vspace{1cm}
\footnotesize
\noindent   {\bf Acknowledgements}  \quad
The author expresses his sincere gratitude to Prof. Alexander Iksanov for the fruitful discussions and numerous valuable comments on the early draft.
\normalsize

\bibliographystyle{abbrv}
\bibliography{bibliography}

\end{document}